\definecolor{bg}{rgb}{0.93,0.93,0.93}
\newtheorem{theorem}{Theorem}[section]
\newtheorem{proposition}[theorem]{Proposition}
\newtheorem{corollary}[theorem]{Corollary}
\newtheorem{definition}[theorem]{Definition}
\newtheorem{remark}[theorem]{Remark}
\newtheorem{example}[theorem]{Example}
\acrodef{cnn}[CNN]{convolutional neural network}
\acrodef{pde}[PDE]{partial differential equation}
\acrodef{rhs}[RHS]{right-hand side}
\acrodef{fe}[FE]{finite element}
\acrodef{fem}[FEM]{finite element method}
\acrodef{dof}[DoF]{degree of freedom}
\acrodef{nn}[NN]{neural network}
\acrodef{pinn}[PINN]{physics-informed \ac{nn}}
\acrodef{vpinn}[VPINN]{variational \ac{pinn}}
\acrodef{ivpinn}[IVPINN]{interpolated \ac{vpinn}}
\acrodef{feinn}[FEINN]{\ac{fe} interpolated \ac{nn}}
\acrodef{gmg}[GMG]{geometric multigrid}
\acrodef{spd}[SPD]{symmetric positive definite}
\acrodef{amr}[AMR]{adaptive mesh refinement}
\acrodef{ane}[ANE]{adaptive network enhancement}
\acrodef{dfr}[DFR]{deep Fourier residual}
\newcommand{\tnor}[1]{{\left\vert\kern-0.25ex\left\vert\kern-0.25ex\left\vert #1 
\right\vert\kern-0.25ex\right\vert\kern-0.25ex\right\vert}}
\newcommand{\fig}[1]{Fig.~\ref{#1}}
\newcommand{\tab}[1]{Tab.~\ref{#1}}
\newcommand{\sect}[1]{Sec.~\ref{#1}}
\newcommand{\norm}[1]{\left\lVert #1 \right\rVert}
\newcommand{\ltwonorm}[1]{\left\lVert #1 \right\rVert _{L^2(\Omega)}}
\newcommand{\honenorm}[1]{\left\lVert #1 \right\rVert _{H^1(\Omega)}}
\newcommand{\argmin}[1]{\underset{#1}{\mathrm{arg\,min}}\,}
\begin{document}

\title[Adaptive finite element interpolated neural networks]{Adaptive finite element interpolated neural networks}
\author{Santiago Badia$^{1,2,*}$}
\email{santiago.badia@monash.edu}
\author{Wei Li$^1$}
\email{wei.li@monash.edu}
\author{Alberto F. Mart\'{\i}n$^3$}
\email{alberto.f.martin@anu.edu.au}
\address{$^1$ School of Mathematics, Monash University, Clayton, Victoria 3800, Australia.}
\address{$^2$ Centre Internacional de M\`etodes Num\`erics a l'Enginyeria, Campus Nord, UPC, 08034, Barcelona, Spain.}
\address{$^3$ School of Computing, The Australian National University, Canberra ACT 2600, Australia.}
\address{$^*$ Corresponding author.}


\begin{abstract}
The use of neural networks to approximate partial differential equations (PDEs) has gained significant attention in recent years. However, the approximation of PDEs with localised phenomena, e.g., sharp gradients and singularities, remains a challenge, due to ill-defined cost functions in terms of pointwise residual sampling or poor numerical integration. In this work, we introduce $h$-adaptive finite element interpolated neural networks. The method relies on the interpolation of a neural network onto a finite element space that is gradually adapted to the solution during the training process to equidistribute a posteriori error indicator. The use of adaptive interpolation is essential in preserving the non-linear approximation capabilities of the neural networks to effectively tackle problems with localised features. The training relies on a gradient-based optimisation of a loss function based on the (dual) norm of the finite element residual of the interpolated neural network. 
Automatic mesh adaptation (i.e., refinement and coarsening) is performed based on a posteriori error indicators till a certain level of accuracy is reached. The proposed methodology can be applied to indefinite and nonsymmetric problems. We carry out a detailed numerical analysis of the scheme and prove several a priori error estimates, depending on the expressiveness of the neural network compared to the interpolation mesh.  Our numerical experiments confirm the effectiveness of the method in capturing sharp gradients and singularities for forward PDE problems, both in 2D and 3D scenarios. We also show that the proposed preconditioning strategy (i.e., using a dual residual norm of the residual as a cost function) enhances training robustness and accelerates convergence.  
\end{abstract}

\keywords{neural networks, PINNs, finite elements, PDE approximation, $h$-adaptivity}

\maketitle
 
\section{Introduction}
\Acp{pde} are an effective mathematical tool for modelling a variety of physical phenomena in science and engineering, such as heat conduction, fluid dynamics and electromagnetism.
Since analytical solutions of \acp{pde} are only available in few special cases, numerical methods, such as the \ac{fem}, become the primary approach for approximating \ac{pde} solutions using computers. The \ac{fem} has solid mathematical foundations~\cite{Ern2021} and it can readily handle a broad range of \ac{pde} problems. Over the years, advanced discretisation techniques have been proposed~\cite{Arnold2006}, and highly efficient (non)linear solvers that can exploit large-scale supercomputers have been designed and polished~\cite{Brune2015,Badia2016,Drzisga2017}.

The \ac{fem} is a linear method that provides the best approximation in some specific measure on a finite-dimensional vector space. However, this space is not adapted to local features (e.g., sharp gradients or singularities) and the convergence is slow for problems that exhibit multiple scales. Adaptive \acp{fe}, e.g., the $h$-adaptive \ac{fem}, adapt the space to the solution by gradually increasing resolution in regions with larger \emph{a posteriori} error estimations. It adds an external loop, known as the solve, estimate, mark and refine loop, to the standard \ac{fem} workflow. Robust \emph{a posteriori} error estimators have been designed for diverse problems (see e.g.,~\cite{Ainsworth1997,Schoeberl2008}), making the adaptive \ac{fem} a widely employed technique in scientific disciplines.

Recently, deep learning, especially \acp{nn}, has demonstrated outstanding proficiency in many challenging tasks, including biomedical image segmentation~\cite{Ronneberger2015}, natural language processing~\cite{Collobert2011} and speech recognition~\cite{Graves2013}, just to mention a few. The striking success achieved by deep learning has encouraged researchers to explore the application of \acp{nn} in solving \acp{pde}. 
In contrast to standard discretisation methods, \acp{nn} are parametrised manifolds that enjoy non-linear approximation power, i.e., \acp{nn} have built-in adaptation capabilities essential in multi-scale problems.
Notably, \acp{pinn} have been proposed in~\cite{Raissi2019} as a novel approach to tackle \ac{pde}-constrained problems. The \ac{pinn} method integrates the \ac{pde} into the loss function used to train the \ac{nn}. This loss evaluates the strong \ac{pde} residual on collocation points, and the derivatives within this loss can be computed using automatic differentiation.
\Acp{pinn} are easy to implement and have been examined in different types of \acp{pde}, such as fractional~\cite{Pang2019} and stochastic \acp{pde}~\cite{Zhang2019}.

Many recent works have tried to address the limitations of \acp{pinn}, aiming to improve their robustness, efficiency and accuracy. The authors in~\cite{Kharazmi2021} introduce \acp{vpinn}, which make use of a polynomial test space and take the norm of the corresponding residual vector as the loss function. To overcome the integration challenge in \acp{vpinn}, the authors in~\cite{Berrone2022} propose the \ac{ivpinn} method. This method interpolates a \ac{nn} onto a \ac{fe} space and uses this polynomial interpolation in the variational residual.
In \acp{pinn}, the imposition of Dirichlet boundary conditions is a challenging task; as opposed to \ac{fe} spaces, \acp{nn} are not interpolatory objects. The addition of a penalty term to weakly impose the Dirichlet boundary conditions has a very negative effect on the training. In~\cite{Berrone2022}, the authors compose the \ac{nn} with a polynomial function to design a \ac{nn} that vanishes on the boundary. This approach is applicable to simple geometries but its extension to general geometries is unclear \cite{Sukumar2022}. Besides, while better than the penalty method, such composition negatively impacts the generalisation of \acp{nn} themselves~\cite{Badia2024}.

Our previous work~\cite{Badia2024} introduces the \ac{feinn} method. This method interpolates the \ac{nn} onto a \ac{fe} space that strongly satisfies the boundary condition, avoiding the issues discussed above. Besides, we propose to use the dual norm of the \ac{pde} residual as the loss function, which can be re-stated as a preconditioned loss function that is well-posed at the continuous level; see also~\cite{Rojas2023} for the application of dual norms in \acp{vpinn}. The preconditioning requires the application of the inverse of the inner product matrix (e.g., a Laplacian in $H^1(\Omega)$-conforming cases), which we efficiently approximate using geometric multigrid techniques. The \ac{feinn} method has been successfully applied to, e.g., forward \acp{pde} with smooth or non-smooth solutions and inverse heat conduction design problems. We show that the trained \acp{nn} can outperform the \ac{fem} solution by several orders of magnitude when the exact solution is smooth.

Despite these efforts, the training of \acp{pinn} and related methods often fail when dealing with target solutions containing high-frequency modes~\cite{Wang2022}, e.g., solutions with sharp gradients or discontinuities.
Some works aim to improve the expressivity of the \ac{nn} to capture these solutions, adding more neurons/layers~\cite{Cai2022}, or adapting the non-linear activation functions~\cite{Jagtap2020}.  
However, none of these works solve the integration issue; a quadrature on a uniform mesh is used in~\cite{Cai2022} and random collocation points in~\cite{Jagtap2020}.
To solve this issue, several approaches propose an adaptive distribution of points in collocation-based \acp{pinn}.
The authors in~\cite{Lu2021} design a residual-based adaptive refinement process to add more collocation points in the locations with the largest \ac{pde} residual, while a deep generative model that guides the sampling of collocation points is proposed in~\cite{Tang2023}. The adaptive collocation point movement method in~\cite{Hou2023} resamples the collocation points and uses an adaptive loss weighting strategy. In~\cite{Mao2023}, in addition to the strong residual of the \ac{pde}, the gradient information of the \ac{nn} is also used to guide the sampling procedure. All these adaptive methods have demonstrated enhanced performance and robustness compared to standard \acp{pinn} when solving \acp{pde} with steep gradients. Still, they suffer the applicability and robustness issues of collocated \acp{pinn} mentioned above. The provably accurate adaptive quadratures in~\cite{Magueresse2024} address the integration issue of \acp{nn} by automatically creating an accurate quadrature rule for loss functions that involve the integration of \ac{pde} residuals. However, the computation of the quadrature is computationally expensive beyond 2D. 

A posteriori error estimations are crucial for adaptive methods. Since the \ac{nn} in our method is interpolated onto a \ac{fe} space, we can readily leverage the well-studied \ac{fe} error estimators~\cite{Ainsworth1997,Kelly1983}. In recent years, \ac{nn}-based a posteriori error estimators have been developed. In~\cite{Minakowski2023}, the authors propose a \ac{nn} error estimator based on the dual weighted residual method for \acp{pinn}. Another related work is~\cite{Berrone2022_err_analysis}, where an error estimator for \acp{vpinn} consisting of a residual-type term, a loss-function term, and data oscillation terms is introduced. 
Functional error control techniques have been derived in~\cite{Muzalevskiy2023} to estimate the error in deep Galerkin simulations.

Remarkably, the state-of-the-art contains few investigations into the application of \acp{nn} for problems with singular solutions. In~\cite{Berrone2022,Badia2024}, the authors use the \ac{fe} interpolation of the trained \acp{nn} as the final solution of a 2D \ac{pde} with singularity. The \ac{dfr} method~\cite{Taylor2024} tackles problems with singular solutions using \acp{nn} via adaptive domain decomposition. However, the method is still prototypical in the sense that {automatic} refinement is currently limited to 1D scenarios and relies on tensor products of 1D orthonormal basis functions and an overkill integration procedure is used. 

In our previous study~\cite{Badia2024}, we have demonstrated \acp{feinn} outstanding performance in solving various forward and inverse problems. In this work, we carry out a detailed numerical analysis of the method for the \ac{fe} residual minimisation in a dual norm. Since the training of \acp{feinn} on a fixed underlying mesh limits their ability to leverage the full potential of \acp{nn} non-linear capabilities, we propose the $h$-adaptive \ac{feinn} method to overcome this limitation by introducing an \emph{automatic} adaptation of the interpolation space (mesh) during training, driven by \emph{a posteriori} error estimation. We propose a \ac{nn} error estimator based on the strong \ac{pde} residual (which is straightforward to implement and can be accurately computed via Gaussian quadratures) and a standard \ac{fe} Kelly indicator. These advances allow for a more comprehensive exploitation of the non-linear nature of \acp{nn}, thus enhancing the capability of the method to tackle problems with localised features. The minimisation of the loss function relies on a train, estimate, mark, adapt procedure, which is repeated till a certain level of accuracy is reached. We leverage hierarchically adapted non-conforming octree-based meshes and adaptive \ac{fe} spaces with hanging node constraints to keep the conformity of the interpolation.

We conduct a comprehensive set of numerical experiments, focusing on forward problems featuring steep gradients in 2D and singularities in both 2D and 3D. 
We verify that expressive enough \acp{nn} can replicate the rate of convergence of the $h$-adaptive \ac{fem} solutions. In addressing the sharp-gradient problem, the generalisation capabilities of the trained \acp{nn} yield remarkable results. The non-interpolated \ac{nn} solutions can be orders of magnitude more accurate than their interpolation counterparts on the same mesh. This observation is consistent with the findings in our previous work~\cite{Badia2024} for smooth solutions. Our findings reveal that the performance of the \ac{nn} error indicator is on par with, if not surpassing, the traditional Kelly \ac{fe} error estimator in various situations. Moreover, we confirm that a well-posed residual norm in the loss function has a tremendously positive impact on the convergence of the $h$-adaptive training process.

The outline of the article is the following. \sect{sec:method} gives details on the method that we propose, the model problem that we tackle, the \ac{fe} discretisation, the \ac{nn} architecture, and the loss functions we aim to minimise in the \ac{feinn} discretisation. In \sect{sec:analysis} we perform relevant numerical analysis of the interpolated \acp{nn} on a fixed \ac{fe} space and discuss the generalisation error of the \acp{nn}. \sect{sec:implementation} describes the implementation of the $h$-adaptive \ac{feinn} method and \sect{sec:experiments} presents numerical experiments on several forward problems with sharp gradients and singularities. Finally, \sect{sec:conclusions} draws conclusions and lists potential directions for further research.
\section{Methodology} \label{sec:method}
\subsection{Continuous problem} \label{subsec:method-contprob}

Let us consider a Hilbert space $\tilde U$ on a Lipschitz polyhedral domain $\Omega \subset \mathbb{R}^d$. We can consider a partition of the boundary $\partial \Omega$ into a Dirichlet $\Gamma_D$ and Neumann $\Gamma_N$ component. Let $U$ be the subspace of $\tilde{U}$ with zero trace on $\Gamma_D$. We denote by $(\cdot,\cdot)_U: U \times U \rightarrow \mathbb{R}$ an inner product in $U$ and by $\|\cdot\|_U$ the norm induced by the inner product. We can define an abstract weak formulation of a \ac{pde} problem: find 
\begin{equation} \label{eq:abstract-weak_form}
  u \in U \ : \ a(u,v) = \ell(v) - a(\bar{u},v), \quad \forall v \in U,
\end{equation}
where $a: U \times U \rightarrow \mathbb{R}$ is a bilinear form and $\ell \in U'$ is a functional in the dual space of $U$. Non-homogeneous Dirichlet boundary conditions are handled by a lifting $\bar{u} \in \tilde{U}$ while Neumann boundary conditions and body forces are included in $\ell$. The final solution of the problem is $u + \bar{u} \in \tilde{U}$. In order for this problem to be well-posed, the following inf-sup condition must be satisfied: there exists a constant $\beta_0 > 0$ such that
\begin{equation}\label{eq:continuous-infsup}
  \inf_{w \in U} \sup_{v \in U} \frac{a(w,v)}{\|w\|_U\|v\|_U} \geq \beta_0.
\end{equation} 
Besides, the bilinear form must be continuous, i.e., there exists a constant $\gamma > 0$ such that
\begin{equation}\label{eq:continuity}
a(u,v) \leq \gamma \|u\|_U\|v\|_U, \quad \forall u,v \in U.
\end{equation}
The solution of (\ref{eq:abstract-weak_form}) vanishes the \ac{pde} residual
\begin{equation} \label{eq:conv_diff_react_weak_residual}
  \mathcal{R}({u}) \doteq \ell(\cdot) - a({u} + \bar{u},\cdot) \in {U}',
\end{equation}
In this work, we propose an $h$-adaptive \ac{feinn} formulation to discretise (\ref{eq:abstract-weak_form}), which can be applied to any \ac{pde} that fits in the previous framework. It covers not only coercive problems but also indefinite and nonsymmetric problems. We consider linear \acp{pde} in the analysis, but the method can readily be applied to non-linear problems. We stress, however, that we focus on \emph{low-dimensional} \acp{pde}, e.g., $d\in \{2, 3\}$. The numerical approximation of high-dimensional \acp{pde} face other problems, due to the curse of dimensionality, and the requirements are different.

\subsection{Finite element method} \label{subsec:method-fem}

We consider a shape-regular partition $\mathcal{T}_h$ of $\Omega$.
On $\mathcal{T}_h$, we can define a trial \ac{fe} space $\tilde{U}_h \subset \tilde{U}$ and the subspace $U_h \subset U$ with zero trace. We define a \ac{fe} interpolant $\pi_h : \mathcal{C}^0 \rightarrow U_h$ obtained by evaluation of the \acp{dof} of $U_h$. 
Similarly, we can define the interpolant $\tilde{\pi}_h$ onto $\tilde{U}_h$. 
We can readily define a \ac{fe} lifting $\bar{u}_{h} \in \tilde{U}_h$ by interpolation on $\Gamma_D$. 

Let us consider a Petrov-Galerkin discretisation of (\ref{eq:abstract-weak_form}) with a test space $V_h \subset U$. The \ac{fe} problem reads: find $\tilde{u}_h = \bar{u}_h + u_h$ where
\begin{equation} \label{eq:diffusion_fe_weak_form}
  u_h \in U_h \ : \ a(u_h, v_h) = \ell(v_h) - a(\bar{u}_h,v_h) \doteq \bar{\ell}(v_h), \quad \forall v_h \in V_h.
\end{equation}
Problem (\ref{eq:diffusion_fe_weak_form}) is well-posed under the assumption that the trial and test \ac{fe} spaces $U_h$, $V_h$ have the same dimension and satisfy the following inf-sup condition \cite{Griffiths1978}: there exists a constant $\beta > 0$ independent of the mesh size $h$ such that  
\begin{equation}\label{eq:discrete-infsup}
  \inf_{w_h \in U_h} \sup_{v_h \in V_h} \frac{a(w_h,v_h)}{\|w_h\|_U\|v_h\|_U} \geq \beta, \qquad \beta > 0.
\end{equation} 
We represent with $\mathcal{R}_h$ the restriction $\mathcal{R}|_{{U}_h \times V_h}$. 

\subsection{Neural networks} \label{subsec:method-nn}
In this work, we use fully-connected feed-forward \acp{nn}. These kind of networks are constructed through a series of affine transformations and non-linear activation functions. However, the only requirement in our analysis is that the output can be written as a linear combination of the neurons at the previous layer, allowing for the application of other architectures as well. We represent the \ac{nn} architecture by a tuple $(n_0, \ldots n_L)\in \mathbb{N}^{(L+1)}$, where $L$ denotes the total number of layers and $n_k$ means the number of neurons in layer $k$ with $0 \leq k \leq L$. We take $n_0 = d$ and, for the scalar-valued \acp{pde} we consider, we have $n_L = 1$.

We denote by $\pmb{\Theta}_k: \mathbb{R}^{n_{k-1}} \to \mathbb{R}^{n_k}$ the affine map from layer $k-1$ to $k$ ($1 \leq k \leq L$), defined as $\pmb{\Theta}_k \pmb{x} = \pmb{W}_k \pmb{x} + \pmb{b}_k$, where $\pmb{W}_k \in \mathbb{R}^{n_k \times n_{k-1}}$ represents the weight matrix and $\pmb{b}_k \in \mathbb{R}^{n_k}$ the bias vector.
The activation function $\rho: \mathbb{R} \to \mathbb{R}$ is applied element-wise after each affine map, excluding the final one. With these definitions in place, the \ac{nn} can be conceptualised as a parametrisable function $\mathcal{N}(\pmb{\theta}): \mathbb{R}^d \to \mathbb{R}$, expressed as: 
\begin{equation} \label{eq:nn_structure}
  \mathcal{N}(\pmb{\theta}) = \pmb{\Theta}_L \circ \rho \circ \pmb{\Theta}_{L-1} \circ \ldots \circ \rho \circ \pmb{\Theta}_1,
\end{equation}
where $\pmb{\theta}$ denotes the collection of all trainable parameters $\pmb{W}_k$ and $\pmb{b}_k$. Besides, we apply the same activation function across all layers. 
In this work, we represent the \ac{nn} architecture as $\mathcal{N}$ and a specific instance of the \ac{nn} with parameters $\pmb{\theta}$ as $\mathcal{N}(\pmb{\theta})$. We extend the notation to also use $\mathcal{N}$ to represent the manifold of all possible realisations of the \ac{nn}, i.e., $\left\{ \mathcal{N}(\pmb{\theta}) \ : \ \pmb{\theta} \in \mathbb{R}^{n_{\mathcal{N}}} \right \}$, $n_\mathcal{N}$ being the total number of parameters. 

The \ac{nn} structure in (\ref{eq:nn_structure}) can be split into two parts: the last-layer operation, $\pmb{\Theta}_L$, defined by parameters $\pmb{\theta}_{ll} \in \mathbb{R}^{n_{L}}$, and the hidden-layer operations, 
$\pmb{\Theta}_{L-1} \circ \ldots \circ \rho \circ \pmb{\Theta}_1$, defined by parameters $\pmb{\theta}_{hl} \in \mathbb{R}^{n_{hl}}$, with $n_{hl}=n_{\mathcal{N}}-n_{L}$.  We refer to the input of the last-layer operation as the {\em last-layer functions}, consisting of $n_{L-1}$ output functions from the hidden-layer operations.
The output of a \ac{nn} realisation can be expressed as a linear combination of the last-layer functions, with coefficients $\pmb{W}_L$, plus a constant, the bias $\pmb{b}_L$. Thus, the hidden-layer parameters $\pmb{\theta}_{hl}$ define a linear space, i.e.,  the one spanned by the last-layer functions. We denote this linear space as $S(\pmb{\theta}_{hl})$, and refer to it as to the last-layer space.
With this notation, the manifold of \ac{nn} realisations can alternatively be expressed as the union of all possible last-layer vector spaces:
\begin{equation}\label{eq:nn_manifold-span}
\mathcal{N} = \bigcup_{\pmb{\theta}_{hl} \in \mathbb{R}^{n_{hl}}} S(\pmb{\theta}_{hl}).
\end{equation}

In the \ac{feinn} method, we apply the $U_h$ interpolation operator $\pi_h(\cdot)$ to the \ac{nn} realisations. Since the interpolation operator is linear, we can define the last-layer interpolated space $S_h(\pmb{\theta}_{hl}) \subset U_h$ as the span of the interpolation of the last-layer functions. The interpolated \ac{nn} manifold $\mathcal{N}_h \doteq \left\{ \pi_h(v) \ : \ v \in \mathcal{N} \right\} \subset U_h$ can be represented as the union of all last-layer interpolated spaces. These definitions are used in \sect{sec:analysis}.  

\subsection{Finite element interpolated neural networks} \label{subsec:method-feinn}
The \ac{feinn} method combines the \ac{nn} in~\eqref{eq:nn_structure} and the \ac{fe} problem in~\eqref{eq:diffusion_fe_weak_form}. The goal is to find
\begin{equation} \label{eq:feinn_continuous_loss}
  u_\mathcal{N} \in 
  \argmin{w_\mathcal{N} \in \mathcal{N}}  
  \mathscr{L}(w_\mathcal{N}), \qquad \mathscr{L}(w_\mathcal{N}) \doteq \norm{\mathcal{R}_h({\pi}_h(w_\mathcal{N}))}_{X'},
\end{equation} 
where $u_{\mathcal{N}}$ is the \ac{nn} approximation of the solution, and $\norm{\cdot}_{X'}$ is some suitable dual norm. Different norms will be considered in the numerical experiments in \sect{sec:experiments}.
However, we only consider $X = U$ in the numerical analysis. 

In general, it is unclear whether $\mathcal{N}$ and $\mathcal{N}_h$ are closed spaces. If $\mathcal{N}_h$ is not closed, we cannot attain the minimum in the optimisation problem \eqref{eq:feinn_continuous_loss}, since we work on $\mathcal{N}_h$, not its closure. To be mathematically precise, the \ac{feinn} problem must be stated as: find
\begin{equation} \label{eq:feinn_inf_continuous_loss}
  u_{\mathcal{N}} \in \mathcal{N} \ : \ \pi_h(u_{\mathcal{N}}) \in 
  \argmin{w_{\mathcal{N},h} \in \overline{\mathcal{N}_h}} \norm{\mathcal{R}_h(w_{\mathcal{N},h})}_{X'}.
\end{equation}  
However, for any tolerance $\epsilon > 0$, we can find 
\begin{equation}\label{eq:feinn_quasimin}
  u_{\mathcal{N},\epsilon} \in \mathcal{N} \ : \ 
  \norm{\mathcal{R}_h(\pi_h(u_{\mathcal{N},\epsilon}))}_{X'} \leq 
  \argmin{w_{\mathcal{N},h} \in \overline{\mathcal{N}_h}}
  \norm{\mathcal{R}_h(w_{\mathcal{N},h})}_{X'} + \epsilon.
\end{equation} 
\begin{remark}
We refer to the solution of (\ref{eq:feinn_quasimin}) as an $\epsilon$-\emph{quasiminimiser} of the \ac{feinn} problem.  The concept of quasiminimisers in the frame of \acp{pinn} has already been  previously discussed~\cite{Shin2023,Brevis2022}.  This is of relative practical relevance since the problem is solved iteratively up to a certain level of accuracy and one can produce a sequence of $\epsilon$-\emph{quasiminimisers} that converges to one of the solutions of the \ac{feinn} problem (\ref{eq:feinn_inf_continuous_loss}).
\end{remark}

\begin{remark}
It has been proven in \cite{Petersen2020} that the space of functions that can be generated by a \ac{nn} is not closed in $L^p$-norms for $0 < p < \infty$ for most activation functions. For the $L^\infty$-norm, and (parameterised) ReLU activation functions, the \ac{nn} space is proven to be closed in some cases. ReLU activation functions are not smooth enough for \acp{pinn} or Deep Galerkin methods \cite{Magueresse2024}. However, in \acp{feinn}, $\mathcal{N}_h$ only depends on the pointwise values of the neural network on the nodes of $U_h$ and ReLU activation functions are suitable, since the method computes spatial derivatives of \ac{fe} functions. Thus, using the results in \cite[Sec. 3.2]{Petersen2020}, one can check that $\mathcal{N}_h$ is closed for ReLU activation functions and the minimum in (\ref{eq:feinn_inf_continuous_loss}) is attained. This is not the case for other activation functions (see Ex.~\ref{ex:tanh}).  
\end{remark}

As discussed in~\cite{Badia2024}, we can define a discrete Riesz projector $\mathcal{B}_h^{-1}: V_h' \to V_h$ such that
\begin{equation*}
    \mathcal{B}_h^{-1}\mathcal{R}_h(w_h) \in V_h \ : \left( \mathcal{B}_h^{-1}\mathcal{R}_h(w_h), v_h \right)_U  = \mathcal{R}_h(w_h)(v_h), \quad \forall v_h \in V_h,
\end{equation*}
and $\mathcal{B}_h^{-1}$  is (an approximation of) the inverse of the corresponding Gramm matrix in $V_h$. Then, we can rewrite the loss function in primal norm as follows: 
\begin{equation} \label{eq:feinn_precond_loss}
  \mathscr{L}(u_\mathcal{N}) = \norm{\mathcal{B}_h^{-1}\mathcal{R}_h ( {\pi}_h(u_\mathcal{N}))}_X.
\end{equation}
Unlike loss functions that rely on discrete norms of the residual vector, this loss function, \emph{preconditioned} by $\mathcal{B}_h$, is well-defined in the limit $h \downarrow 0$. In practice, $\mathcal{B}_h^{-1}$ can be replaced by any spectrally equivalent approximation to reduce computational costs; e.g. one \ac{gmg} cycle has been proposed in \cite{Badia2024}.

\begin{remark}
We note that the \ac{fe} residual $\mathcal{R}_h$ in~\eqref{eq:feinn_continuous_loss} is isomorphic to the vector $[\mathbf{r}_h(w_h)]_i = \left< \mathcal{R}(w_h), \varphi^i \right> \doteq \mathcal{R}(w_h)(\varphi^i)$, where $\{\varphi^i\}_{i=1}^N$ are the \ac{fe} shape functions that span the test space $V_h$. As a result, we can use the following loss function:
\begin{equation} \label{eq:feinn_discrete_loss}
    \mathscr{L}(u_\mathcal{N}) = \norm{\mathbf{r}_h(\pi_h(u_\mathcal{N}))}_\chi,
\end{equation}
where $\norm{\cdot}_\chi$ is an algebraic norm of a vector. The algebraic norm is ill-posed at the continuous level and, as a consequence, the convergence of the minimisation process is expected to deteriorate as $h \downarrow 0$~\cite{Mardal2010}.
\end{remark}

\subsection{$h$-Adaptive finite element interpolated neural networks} \label{subsec:method-afeinn}
The $h$-adaptive \ac{feinn} method is essentially the \ac{feinn} method equipped with a dynamically/automatically adapting mesh. The method is sketched in Alg.~\ref{alg:afeinn}. It consists of a loop of the form:
\begin{equation*}
  \text{TRAIN} \rightarrow \text{ESTIMATE}  \rightarrow \text{MARK} \rightarrow \text{ADAPT}.
\end{equation*}
At the beginning of the $i$-th loop iteration, the \ac{nn} realisation $u_{\mathcal{N}(\pmb{\theta}_i)}$ has initial parameters $\pmb{\theta}_i$. After training of $u_{\mathcal{N}(\pmb{\theta}_i)}$ using the weak \ac{pde} loss built upon the partition $\mathcal{T}_i$, the parameters of the \ac{nn} become $\pmb{\theta}_{i+1}$.
Then, we estimate the local error for each element $K \in \mathcal{T}_i$ using an error indicator $\zeta_K$. Next, we mark a set of elements $\mathcal{M}_i^{\rm r}$ for refinement and another set $\mathcal{M}_i^{\rm c}$ for coarsening. We denote the refinement ratio as $\delta^{\rm r} \in (0, 1)$, and the coarsening ratio as $\delta^{\rm c} \in (0, 1)$.
To determine $\mathcal{M}_i^{\rm r}$ and $\mathcal{M}_i^{\rm c}$, we use D\"{o}rfler marking. Essentially, we select the top $\delta^{\rm r}$ elements with the largest estimator for refinement and the bottom $\delta^{\rm c}$ with the smallest estimator for coarsening. We then adapt the marked elements to construct a new  partition $\mathcal{T}_{i+1}$. Finally, we start a new iteration of the loop by interpolating $u_{\mathcal{N}(\pmb{\theta}_{i+1})}$ onto $\mathcal{T}_{i+1}$. This iterative procedure continues until a stopping criterion is met. 

\begin{algorithm}[h]
  \caption{The $h$-adaptive \ac{feinn} procedure}\label{alg:afeinn}
  \KwSty{Requirements:} $k_U$, stopping criterion, $\delta^{\rm r} \in (0, 1)$, $\delta^{\rm c} \in (0, 1)$\\
  \KwIn{$u_{\mathcal{N}(\pmb{\theta}_0)}$, $\mathcal{T}_0$}
  \KwOut{$u_{\mathcal{N}(\pmb{\theta}_{I+1})}$, $\mathcal{T}_I$}
  \For{$i = 0, 1, 2, ...$}{
    TRAIN $u_{\mathcal{N}(\pmb{\theta}_i)}$ by interpolating it into a \ac{fe} space of order $k_U$ built out of 
    $\mathcal{T}_i$ using the loss defined in~\eqref{eq:feinn_discrete_loss} (or~\eqref{eq:feinn_precond_loss} for the preconditioned case); update \ac{nn} parameters from $\pmb{\theta}_i$ to $\pmb{\theta}_{i+1}$\;
    \If{stopping criterion is met}{\KwSty{break}}
    ESTIMATE the local error for each element $K \in \mathcal{T}_i$ by $\zeta_K = \zeta_K(u_{\mathcal{N}(\pmb{\theta}_{i+1})})$\;
    MARK a set $\mathcal{M}_i^{\rm r} \subset \mathcal{T}_i$ with largest $\zeta_K$ such that $\#\mathcal{M}_i^{\rm r} \approx \delta^{\rm r}\#\mathcal{T}_i$ and another set $\mathcal{M}_i^{\rm c} \subset \mathcal{T}_i$ with smallest $\zeta_K$ such that $\#\mathcal{M}_i^{\rm c} \approx \delta^{\rm c}\#\mathcal{T}_i$\;
    ADAPT the mesh: refine $K \in \mathcal{M}_i^{\rm r}$, and coarsen $K \in \mathcal{M}_i^{\rm c}$ to construct a new partition of the domain $\mathcal{T}_{i+1}$\;
  }
  $I \gets i$\;
  \KwRet{$u_{\mathcal{N}(\pmb{\theta}_{I+1})}$, $\mathcal{T}_I$}
\end{algorithm}

Alg.~\ref{alg:afeinn}  can be in principle combined with any kind of adaptive mesh approach. Mainly because of performance and scalability reasons, in this work we use hierarchically-adapted (i.e., nested) non-conforming octree-based meshes.  
We provide more details on our particular implementation of Alg.~\ref{alg:afeinn} in \sect{subsec:nonconforming_interpolation}.

\subsection{Gradient-conforming discretisation}

As a model problem in this work, we consider the Poisson equation. The problem reads as (\ref{eq:abstract-weak_form}) with $\tilde{U} = H^1(\Omega)$, $U = H^1_0(\Omega)$ and the bilinear form and functional defined as follows:
\begin{equation*}
    a(u,v) = \int_{\Omega} \pmb{\nabla}(u) \cdot \pmb{\nabla}(v), \quad 
\ell(v) = \int_{\Omega} f v,
\end{equation*}

We consider an \emph{exotic} Petrov-Galerkin \ac{fe} method. The well-posedness of the discretisation is guaranteed by the discrete inf-sup condition (\ref{eq:discrete-infsup}) and the same dimension for trial and test spaces  (see~\cite{Griffiths1978}). We use a grad-conforming nodal Lagrangian \ac{fe} space $U_h$ of order $k_U$ built upon a (possibly non-conforming) partition $\mathcal{T}_h$ of $\Omega$. The \ac{fe} space $U_h$ is built upon the partition $\mathcal{T}_h$. We use a \emph{linearised} test \ac{fe} space $V_h$ ($k_V=1$), built upon the partition $\mathcal{T}_h$ obtained after $k_U$ levels of uniform refinement applied to $\mathcal{T}_h$. It is easy to check that the dimension of ${U}_h$ and $V_h$ are identical.
The reason behind this choice of the test space is the dramatic improvement in the convergence of the optimisation process compared to a Galerkin discretisation, as observed, e.g., in~\cite{Berrone2022,Badia2024}.

\subsection{Error indicators} \label{subsec:method-errorind}
We introduce three types of error indicators to estimate the local errors of each element. In this section, we restrict the exposition to the Laplacian problem for the sake of simplicity. However, the error indicators can be extended to other \acp{pde}. 

The first error indicator is the classic Kelly error estimator~\cite{Kelly1983,Ainsworth1997}, a well-established indicator in $h$-adaptive \ac{fem}, particularly for the Poisson equation. This error indicator estimates the error per element by integrating the gradient jump of the \ac{nn} interpolation along the faces of each element. Specifically, for the element $K$, the Kelly error indicator is defined as
\begin{equation}
    \zeta_K^k = \sqrt{\sum_{F \in \partial K} c_F \int_{F} \left\llbracket \frac{\partial ({\pi}_h(u_{\mathcal{N}}) + \bar{u}_h)}{\partial n} \right\rrbracket^2}, \label{eq:kelly_error}
\end{equation}
where $c_F$ is a scale factor, $\partial K$ is the boundary of $K$, and $\llbracket \cdot \rrbracket$ denotes the jump across the face $F$. 
The \ac{fe} offset $\bar{u}_h$ term in $\zeta_K^k$ can be excluded if the Dirichlet boundary is exactly approximated by the employed finite elements, i.e. $\bar{u}_h = g$.
In the notation $\zeta_K^k$, the superscript $k$ stands for ``Kelly'', and the subscript $K$ denotes an element of the mesh.  We use $c_F = h_K / 24$ as proposed in~\cite{Ainsworth1997}, where $h_K$ is the characteristic size of element $K$. 

Then we introduce another error indicator that leverages the \ac{nn} and the strong form of the \ac{pde}. The error estimator at element $K$ is defined as
\begin{equation} \label{eq:nn_error}
    \zeta_K^n = \sqrt{\int_{K} |\pmb{\Delta} u_{\mathcal{N}} + f|^2}.
\end{equation}
The superscript $n$ in $\zeta_K^n$ stands for ``network''. 
We note that $\zeta_K^n$ is similar to the mean \ac{pde} residual $\varepsilon_r$ defined in~\cite{Lu2021}. 
The main difference is that $\zeta_K^n$ is evaluated with a high-degree Gauss quadrature, while $\varepsilon_r$ is computed by Monte Carlo integration. At each mesh adaptation step, the strategy in~\cite{Lu2021} adds more collocation points only on the region with the highest $\varepsilon_r$. Instead, our approach can refine multiple elements with the largest errors and coarsen those with the smallest errors simultaneously.

Finally, we include the real error of the interpolated \ac{nn} as the third indicator, which is defined for element $K$ as:
\begin{equation*}
    \zeta_K^r = \sqrt{\int_{K} |{\pi}_h(u_{\mathcal{N}}) + \bar{u}_h - u|^2},
\end{equation*}
where $u$ is the exact solution. The superscript $r$ in $\zeta_K^r$ stands for ``real''. In practice, the real error is unavailable. In this study, we use it as a reference to assess and compare the performance of the other two indicators.

\section{Numerical analysis}\label{sec:analysis}

In this section, we extend the analysis in~\cite{Badia2024} in multiple ways.
In particular, in Sec.~\ref{sec:error_analysis}, we bound the error of the interpolated \ac{nn}
under the general scenario in which the \ac{nn} cannot exactly emulate the \ac{fe} space $U_h$, i.e., $\mathcal{N}_h \subsetneq U_h$ (or, equivalently, the interpolation operator $\pi_h: \mathcal{N} \mapsto U_h$ is not surjective).
Remarkably, we prove that the error of the interpolated \ac{nn} is bounded by the interpolation error among all possible vector spaces that result from the interpolation of the last-layer functions in the \ac{nn} architecture. In Sec.~\ref{sec:quasi_emulation}, we perform error analysis under a weakened assumption of exact emulation which we refer to as {\em quasi-emulation}. This analysis is relevant as exact emulation might not be feasible for some activation functions, regardless of the expressivity of the \ac{nn}. Finally, in Sec.~\ref{subsec:generalisation-error}, we also discuss the well-posedness of the \ac{feinn} problem in a quotient space and discuss the accuracy of the non-interpolated \ac{nn} by considering the maximum distance between elements of an equivalence class. 

\subsection{Error analysis} \label{sec:error_analysis}

The error analysis in this section relies on the discrete inf-sup condition \eqref{eq:discrete-infsup}, and  can readily be applied to indefinite systems. In the following theorem, we state the well-posedness of the residual minimisation problem in a \ac{fe} setting.

\begin{theorem}\label{th:fe-well-posed}
  Let us consider a pair of \ac{fe} spaces $U_h$ and $V_h$ that satisfy the inf-sup condition (\ref{eq:discrete-infsup}).
  The discrete quadratic minimisation problem 
  \begin{equation}\label{eq:fe-min}
  u_h = \underset{w_h \in U_h}{\mathrm{arg} \, \mathrm{min}} \| r_h(w_h)\|_U, \qquad r_h(w_h) \doteq \mathcal{B}_h^{-1} \mathcal{R}_h(w_h),
  \end{equation} 
  is well-posed. The problem can be re-stated as: find $u_h \in U_h$ and $r_h = r_h(u_h) \in V_h$ such that
  \begin{align}
    (r_h,v_h)_U + a(u_h,v_h) &= \bar{\ell}(v_h), \quad && \forall v_h \in V_h, \label{eq:feinn-weak-1}\\
    a(w_h,r_h) &= 0, \quad && \forall w_h \in U_h. \label{eq:feinn-weak-2} 
  \end{align}
  The solution satisfies the following a priori estimates:
  \begin{equation}\label{eq:apriori}
    \|r_h\|_U \leq \underset{w_h \in U_h}{\mathrm{inf}} 4 \gamma \| u - w_h \|_{U}, \qquad \|u - u_h\|_U \leq \left( 1 + \frac{4 \gamma}{\beta} \right) \underset{w_h \in U_h}{\mathrm{inf}} \|u - w_h\|_U.
  \end{equation}
\end{theorem}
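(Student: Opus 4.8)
The plan is to recognise \eqref{eq:fe-min} as a linear least-squares problem and to read off its normal equations, which are precisely the mixed system \eqref{eq:feinn-weak-1}--\eqref{eq:feinn-weak-2}. First I would expose the affine structure of the residual: define the operator $T: U_h \to V_h$ by $(T w_h, v_h)_U = a(w_h, v_h)$ for all $v_h \in V_h$, and the element $r_0 \in V_h$ by $(r_0, v_h)_U = \bar{\ell}(v_h)$. By the definition of the discrete Riesz map $\mathcal{B}_h^{-1}$ we then have $r_h(w_h) = r_0 - T w_h$, so the loss $w_h \mapsto \| r_0 - T w_h \|_U^2$ is a quadratic functional on the finite-dimensional space $U_h$.

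Next I would establish well-posedness. The functional is strictly convex and coercive as soon as $T$ is injective, and injectivity follows directly from the discrete inf-sup condition \eqref{eq:discrete-infsup}: if $T w_h = 0$, then $a(w_h, v_h) = 0$ for all $v_h \in V_h$, whence $\| w_h \|_U = 0$. Hence a unique minimiser $u_h$ exists. Differentiating the loss at $u_h$ in an arbitrary direction $w_h \in U_h$ gives the first-order optimality condition $(r_h, T w_h)_U = 0$, i.e. $a(w_h, r_h) = 0$ for all $w_h \in U_h$, which is \eqref{eq:feinn-weak-2}; equation \eqref{eq:feinn-weak-1} is simply the defining relation $r_h = r_0 - T u_h$. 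Conversely, I would note that \eqref{eq:feinn-weak-1}--\eqref{eq:feinn-weak-2} is a symmetric saddle-point system whose well-posedness follows from the coercivity of $(\cdot,\cdot)_U$ on $V_h$ together with the inf-sup stability of $a$ on $U_h \times V_h$, so the mixed formulation and the minimisation problem share the same unique solution.

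For the a priori estimates I would first record the consistency identity: since the exact solution satisfies $a(u, v_h) = \bar{\ell}(v_h)$ for all $v_h \in V_h$, subtracting \eqref{eq:feinn-weak-1} yields $(r_h, v_h)_U = a(u - u_h, v_h)$ on $V_h$. The residual bound then comes from optimality and continuity \eqref{eq:continuity}: for any $w_h \in U_h$, using that $r_h(w_h)$ is the Riesz representative of $a(u - w_h, \cdot)$ on $V_h$,
\[ \| r_h \|_U = \| r_h(u_h) \|_U \le \| r_h(w_h) \|_U = \sup_{v_h \in V_h} \frac{a(u - w_h, v_h)}{\| v_h \|_U} \le \gamma \| u - w_h \|_U, \]
and taking the infimum over $w_h$ gives the first inequality in \eqref{eq:apriori}. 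For the error bound I would use a C\'ea-type argument: writing $u - u_h = (u - w_h) + (w_h - u_h)$ and bounding the discrete part via the inf-sup condition,
\[ \beta \| w_h - u_h \|_U \le \sup_{v_h \in V_h} \frac{a(w_h - u_h, v_h)}{\| v_h \|_U} \le \gamma \| u - w_h \|_U + \| r_h \|_U, \]
where I split $a(w_h - u_h, v_h) = a(w_h - u, v_h) + (r_h, v_h)_U$ via the consistency identity; inserting the residual bound and applying the triangle inequality then yields the second inequality in \eqref{eq:apriori}.

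The main obstacle I anticipate is tracking the constants correctly: with the exact Riesz map the argument above produces the cleaner constants $\gamma$ and $1 + 2\gamma/\beta$, so the stated factor $4\gamma$ is a non-sharp constant that also leaves room for $\mathcal{B}_h^{-1}$ being only spectrally equivalent to the exact Gram operator, in which case the equivalence constants of the preconditioner must be propagated through both the residual representation and the inf-sup step. A secondary technical point is justifying the consistency identity when the lifting $\bar{u}_h$ only approximates $\bar{u}$; this either requires assuming an exact discrete lifting or carrying an additional data-oscillation term, which I would keep separate from the approximation estimate.
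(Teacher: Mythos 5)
Your proof is correct and reaches the stated estimates by a genuinely different route from the paper. Both arguments derive the optimality system \eqref{eq:feinn-weak-1}--\eqref{eq:feinn-weak-2} as the normal equations of the quadratic least-squares problem, but from there they diverge. The paper treats \eqref{eq:feinn-weak-1}--\eqref{eq:feinn-weak-2} as a saddle-point system and proves a Brezzi-type stability bound by testing with $v_h = r_h + \beta k_h$ and $w_h = -u_h$, where $k_h$ is the inf-sup supremiser for $u_h$; completing the square and then shifting by an arbitrary $w_h$ yields $\|r_h\|_U + \beta\|u_h - w_h\|_U \le 4\gamma\|u-w_h\|_U$ in one shot. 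The factor $4$ is an artefact of that particular test-function combination, not of any inexactness in $\mathcal{B}_h^{-1}$ as you speculate (the theorem takes the exact discrete Riesz map). You instead obtain well-posedness from strict convexity of the quadratic loss (injectivity of $T$ via the inf-sup condition), bound the residual by the direct optimality comparison $\|r_h(u_h)\|_U \le \|r_h(w_h)\|_U \le \gamma\|u-w_h\|_U$, and run a separate C\'ea argument for the error; this is more elementary, exploits the minimisation structure more directly, and gives the sharper constants $\gamma$ and $1+2\gamma/\beta$, which imply the stated bounds. What the paper's route buys is a stability estimate for the mixed system itself, $\|r_h\|_U + \beta\|u_h\|_U \le 4\|\bar\ell\|_{U'}$, i.e.\ control of the solution pair by the data, which is the natural form if one later perturbs the right-hand side. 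Your caveat about the lifting is well taken but applies equally to the paper, which also silently uses $\bar\ell(v_h) = a(u,v_h)$ and thereby presumes the discrete lifting introduces no consistency error.
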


\begin{proof}
  First, we note that the loss function is differentiable (see~\cite[Prop. 3.1]{Badia2024}). To obtain (\ref{eq:feinn-weak-1})-(\ref{eq:feinn-weak-2}), we compute the variational derivative of (\ref{eq:fe-min}) to variations of $u_h$ and recall the definition of the discrete Riesz projector. Under the assumption of the discrete inf-sup condition (\ref{eq:discrete-infsup}), we can prove the well-posedness of the problem using the Babuska-Brezzi theory. Namely, using (\ref{eq:discrete-infsup}), we can readily find $k_h \in V_h$ such that $\| k_h \|_U = \| u_h \|_U$  and $a(u_h,k_h) \geq \beta \| u_h \|_U^2$. We can take $v_h = r_h + \alpha k_h$ 
  for an arbitrary $\alpha > 0$ and $w_h = -u_h$, to get
  \begin{align}
    \|r_h\|^2_U + \alpha a(u_h,k_h) + \alpha (r_h, k_h )_U & \geq  \|r_h\|^2_U + \alpha \beta \|u_h\|^2_U - \frac{\alpha \beta}{ 2} \|u_h\|_U^2 - \frac{\alpha}{2 \beta} \|r_h\|^2_{U} \\ & = \left( 1 - \frac{\alpha}{2 \beta} \right) \| r_h \|^2_U + \frac{\alpha \beta}{2} \| u_h \|^2_U. 
  \end{align}
  Taking $\alpha = \beta$, we get, using (\ref{eq:feinn-weak-1}):
  \begin{align}\label{eq:apriori-aux}
    \frac{1}{4}(\|r_h \|_U + \beta \| u_h \|_U)^2 & \leq 
    \frac{1}{2} \| r_h \|^2_U + \frac{\beta^2}{2} \| u_h \|^2_U \leq \|r_h\|^2_U + \beta a(u_h,k_h) + \beta (r_h, k_h )_U \\ & \leq \|\bar{\ell}\|_{U'} (\|r_h \|_U + \beta \| u_h \|_U).
  \end{align}
  Using the fact that $\bar{\ell}(v_h) = a(u,v_h)$, and subtracting $a(w_h,v_h)$ on both sides of (\ref{eq:feinn-weak-1}) for an arbitrary $w_h \in U_h$ and using the continuity of the bilinear form $a$, we get the following error estimate from (\ref{eq:apriori-aux}):
\begin{equation}\label{eq:error}
  \| r_h \|_U + \beta \|u_h - w_h\|_U \leq 4 \gamma \|u - w_h\|_U.
\end{equation}
Using the triangle inequality, we readily get the second estimate in (\ref{eq:apriori}). It proves the theorem.
\end{proof}
\begin{remark}
We note that $dim(U_h) \neq dim(V_h)$ in general, and the trial space can be of lower dimension than the test space. In the case in which $dim(U_h) = dim(V_h)$, the solution $u_h$ of (\ref{eq:feinn-weak-1})-(\ref{eq:feinn-weak-2}) is the solution of (\ref{eq:abstract-weak_form}) and $r_h = 0$.  
\end{remark}  

The previous theorem readily applies to any subspace $W_h \subseteq U_h$, since the inf-sup condition readily holds for $W_h \times V_h$. We use the notation $u(W_h)$ and $r(W_h)$ to denote the unique solution of (\ref{eq:feinn-weak-1})-(\ref{eq:feinn-weak-2}) on $W_h \times V_h$. We can prove the following result.

\begin{proposition}\label{prop:aux-r-u-bound}
  Let us consider $u_{h,1}, \, u_{h,2} \in U_h$.
  The following error estimate holds: 
  \begin{equation}\label{eq:restricted}
    \frac{1}{\beta} \| u_{h,1} - u_{h,2} \|_U \leq \|r(u_{h,1}) - r(u_{h,2})\|_U \leq \gamma \|u_{h,1} - u_{h,2} \|_U. 
  \end{equation}
\end{proposition}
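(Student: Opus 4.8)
The plan is to reduce the two-sided estimate to a single linear identity relating the trial-space difference $e \doteq u_{h,1} - u_{h,2} \in U_h$ to the difference of residual representatives $s \doteq r(u_{h,1}) - r(u_{h,2}) \in V_h$, and then to read off the upper bound from continuity (\ref{eq:continuity}) and the lower bound from the discrete inf-sup condition (\ref{eq:discrete-infsup}). Recall from (\ref{eq:feinn-weak-1}) that $r(w_h) = \mathcal{B}_h^{-1}\mathcal{R}_h(w_h) \in V_h$ is characterised by $(r(w_h), v_h)_U = \bar{\ell}(v_h) - a(w_h, v_h)$ for all $v_h \in V_h$. Writing this for $w_h = u_{h,1}$ and for $w_h = u_{h,2}$ and subtracting, the data term $\bar{\ell}$ cancels, and using linearity of $\mathcal{B}_h^{-1}$ and of $a$ in its first argument I obtain the key relation
\begin{equation*}
  (s, v_h)_U = -a(e, v_h), \quad \forall v_h \in V_h.
\end{equation*}
This reduction is the substantive step; the remaining estimates are one-liners applied to this identity, and the only structural fact they require is linearity of the Riesz map $\mathcal{B}_h^{-1}$.

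For the upper bound I would test the key relation with $v_h = s \in V_h$, obtaining $\|s\|_U^2 = -a(e, s) \leq \gamma \|e\|_U \|s\|_U$ by (\ref{eq:continuity}), whence $\|s\|_U \leq \gamma \|e\|_U$ (the case $s = 0$ being trivial). For the lower bound I would apply the inf-sup condition (\ref{eq:discrete-infsup}) to $e \in U_h$ and substitute $a(e, \cdot) = -(s, \cdot)_U$ from the key relation; since $s \in V_h$ one has $\sup_{v_h \in V_h} (s, v_h)_U / \|v_h\|_U = \|s\|_U$, and this supremum is unchanged under $v_h \mapsto -v_h$, so the inf-sup inequality yields $\beta \|e\|_U \leq \sup_{v_h \in V_h} a(e, v_h)/\|v_h\|_U = \|s\|_U$.

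The analytic content is entirely in the linearity reduction and these two test-function choices, and I expect no genuine obstacle there. The single point I would check with care is the precise constant in the lower bound: the inf-sup argument above delivers the factor $\beta$, i.e. $\beta \|e\|_U \leq \|s\|_U$, which rearranges to $\|e\|_U \leq \beta^{-1} \|s\|_U$. I would reconcile this against the factor $1/\beta$ that multiplies $\|u_{h,1} - u_{h,2}\|_U$ on the left of the stated estimate, verifying through the normalisation of $(\cdot,\cdot)_U$ and of $\mathcal{B}_h^{-1}$ whether the intended lower bound reads $\beta \|e\|_U \leq \|s\|_U$ or whether a different scaling is in force; this is the only place where a spurious constant could enter the argument.
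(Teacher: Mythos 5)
Your proof is correct and follows essentially the same route as the paper's: subtract the two defining identities for $r(\cdot)$ to obtain $(s,v_h)_U = -a(e,v_h)$ for all $v_h \in V_h$, then use continuity (testing with $v_h = s$) for the upper bound and the discrete inf-sup condition together with Cauchy--Schwarz for the lower bound. Your reservation about the constant is well founded: the argument delivers $\beta\,\|e\|_U \leq \|s\|_U$ rather than $\tfrac{1}{\beta}\|e\|_U \leq \|s\|_U$ as written in (\ref{eq:restricted}) --- as stated the two-sided bound would force $\beta\gamma \geq 1$, which does not hold in general --- so the factor $1/\beta$ on the left of (\ref{eq:restricted}) appears to be a typo, and the paper's own proof (identical in substance to yours) supports the $\beta\,\|e\|_U$ version.
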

\begin{proof}
  Let us denote for conciseness $u_h \doteq u_{h,1}$, $u_h' \doteq u_{h,2}$, $r_h \doteq r(u_{h,1})$, $r_h' \doteq r(u_{h,2})$. Using the definition of $r_h(\cdot)$ in  (\ref{eq:fe-min}), we have:
  \begin{align*}
    (r_h, v_h)_U &= - a(u_h, v_h) + \bar{\ell}(v_h), \\
    (r_h', v_h)_U &= - a(u_h', v_h) + \bar{\ell}(v_h), \qquad \forall v_h \in V_h.
  \end{align*}
  Subtracting the previous equations, we get
  \begin{equation}
    (r_h - r_h', v_h)_U = -a(u_h - u_h', v_h), \qquad \forall v_h \in V_h. 
  \end{equation}
  The upper bound can readily be obtained using the continuity of $a$ in (\ref{eq:continuity}) , while the lower bound is obtained using the discrete inf-sup condition (\ref{eq:discrete-infsup}) and the Cauchy-Schwarz inequality.
\end{proof}

\begin{proposition}\label{prop:aux-two-spaces}
  Let us consider $y_h \in U_h$ and $
  W_h \subseteq U_h$ such that $\|r(y_h)\|_U \leq \|r_h(W_h)\|_U + \epsilon$, $\epsilon \geq  0$.  The following a priori error estimate holds:
  \begin{equation}
    \| u - y_h \|_U \leq  \rho \inf_{w_h \in W_{h}} \| u - w_{h} \|_U + \beta \epsilon, \qquad \rho \doteq 1 + \frac{4 \gamma}{\beta} + {8 \gamma}{\beta}.
  \end{equation}
  \end{proposition}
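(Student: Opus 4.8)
The plan is to interpose the exact discrete minimiser over the subspace $W_h$ and to split the error with the triangle inequality. Let $u(W_h) \in W_h$ and $r_h(W_h) = r(u(W_h)) \in V_h$ be the solution of \eqref{eq:feinn-weak-1}--\eqref{eq:feinn-weak-2} on the pair $W_h \times V_h$; this is well defined because the inf-sup condition \eqref{eq:discrete-infsup} carries over to $W_h \times V_h$, so Theorem~\ref{th:fe-well-posed} applies verbatim to this pair. I would then write
\[
  \| u - y_h \|_U \leq \| u - u(W_h) \|_U + \| u(W_h) - y_h \|_U,
\]
and bound the first summand directly from the a priori estimates \eqref{eq:apriori} of Theorem~\ref{th:fe-well-posed} on $W_h$, namely $\| u - u(W_h) \|_U \leq (1 + 4\gamma/\beta)\, \inf_{w_h \in W_h} \| u - w_h \|_U$, while also recording the residual bound $\| r_h(W_h) \|_U \leq 4\gamma\, \inf_{w_h \in W_h} \| u - w_h \|_U$ for later use. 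This reduces everything to estimating the distance $\| u(W_h) - y_h \|_U$ between two elements of $U_h$.

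For that second summand I would apply Proposition~\ref{prop:aux-r-u-bound}, which is available precisely because both $y_h$ and $u(W_h)$ belong to $U_h$ (the former by hypothesis, the latter since $W_h \subseteq U_h$). Its lower bound gives
\[
  \| u(W_h) - y_h \|_U \leq \beta\, \| r(y_h) - r(u(W_h)) \|_U \leq \beta\left( \| r(y_h) \|_U + \| r(u(W_h)) \|_U \right),
\]
where $r(\cdot) = \mathcal{B}_h^{-1}\mathcal{R}_h(\cdot)$ is the common residual map on $U_h$ and $r(u(W_h)) = r_h(W_h)$. Feeding in the quasi-minimality hypothesis $\| r(y_h) \|_U \leq \| r_h(W_h) \|_U + \epsilon$ turns this into $\| u(W_h) - y_h \|_U \leq \beta\,(2\| r_h(W_h) \|_U + \epsilon)$, and substituting the residual bound recorded above yields $\| u(W_h) - y_h \|_U \leq 8\gamma\beta\, \inf_{w_h \in W_h} \| u - w_h \|_U + \beta\epsilon$.

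Summing the two contributions collects the approximation terms into the single factor $\rho = 1 + 4\gamma/\beta + 8\gamma\beta$ in front of $\inf_{w_h \in W_h} \| u - w_h \|_U$, with the additive remainder $\beta\epsilon$, which is exactly the asserted estimate. I expect the only delicate point to be the bookkeeping of the two roles played by the residual symbol: $\| r(y_h) \|_U$ is the affine residual map evaluated at the fixed element $y_h \in U_h$, whereas $r_h(W_h)$ is that same map evaluated at the minimiser $u(W_h)$ over $W_h$; keeping these aligned is what lets the hypothesis, Theorem~\ref{th:fe-well-posed}, and Proposition~\ref{prop:aux-r-u-bound} chain together. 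Beyond that the argument is entirely triangle inequalities and the previously established stability constants, so no substantial estimate is hidden and the main residual risk is just tracking the constant $\rho$ correctly.
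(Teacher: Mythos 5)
Your proof is correct and follows essentially the same route as the paper's: interpose the minimiser $u(W_h)$ over $W_h$, bound $\| u - u(W_h) \|_U$ and $\| r(W_h) \|_U$ via the a priori estimates \eqref{eq:apriori}, and control $\| y_h - u(W_h) \|_U$ through the lower bound of Proposition~\ref{prop:aux-r-u-bound} combined with the quasi-minimality hypothesis, yielding the same constant $\rho = 1 + 4\gamma/\beta + 8\gamma\beta$. Your bookkeeping of the $\beta\epsilon$ term is in fact more careful than the paper's own final display, which drops the factor $\beta$ on $\epsilon$ despite it appearing in the stated estimate.
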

  \begin{proof}
  Using Prop. \ref{prop:aux-r-u-bound}, the triangle inequality, the statement of the proposition and the error estimate in (\ref{eq:apriori}), we get:
  \begin{equation*}
    \frac{1}{\beta} \| y_h - u(W_h) \|_U \leq \|r(y_h) - r(W_h)\|_U \leq 2 \| r(W_h) \|_U + \epsilon \leq 8 \gamma \inf_{w_{h} \in W_{h}} \| u - w_{h} \|_U + \epsilon.
  \end{equation*}
  Then, using the triangle inequality and (\ref{eq:apriori}), we get:  
  \begin{equation*}
    \| u - y_h \|_U \leq \|_U u - u(W_h) \|_U + \| y_h - u(W_h) \|_U \leq \rho \inf_{w_h \in W_{h}} \| u - w_{h} \|_U + \epsilon. 
  \end{equation*}
  \end{proof}

Next, we prove that the infimum (\ref{eq:feinn_inf_continuous_loss}) (or in (\ref{eq:feinn_quasimin})) is the same if we restrict to the space of solutions of (\ref{eq:feinn-weak-1})-(\ref{eq:feinn-weak-2}) for all last-layer interpolated spaces, i.e., $\mathcal{M}_h \doteq \left\{ u_h(S_h(\pmb{\theta}_{hl})) : \pmb{\theta}_{hl} \in \mathbb{R}^{n_{hl}}\right\}$. 

\begin{proposition}\label{prop:eq-min}
The following equalities hold:
\begin{equation}
 \underset{w_\mathcal{N} \in \mathcal{N}}{\mathrm{inf}} \| r_h(\pi_h(w_\mathcal{N}))\|_U =
 \underset{w_h \in \mathcal{N}_h}{\mathrm{inf}} \| r_h(w_h)\|_U = 
 \underset{u_h \in \mathcal{M}_h}{\mathrm{inf}} \| r_h(u_h)\|_U.
\end{equation} 
\end{proposition}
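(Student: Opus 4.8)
The plan is to prove the two equalities in turn, reading the displayed chain from left to right. The whole statement is a reorganisation of infima, so I do not expect any genuine analytic difficulty; the substance lies in exploiting the union structure (\ref{eq:nn_manifold-span}) of the \ac{nn} manifold and in applying Theorem~\ref{th:fe-well-posed} subspace-by-subspace.

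For the leftmost equality I would argue directly from the definition $\mathcal{N}_h \doteq \{\pi_h(v) : v \in \mathcal{N}\}$. The map $w_\mathcal{N} \mapsto \pi_h(w_\mathcal{N})$ is, by construction, a surjection of $\mathcal{N}$ onto $\mathcal{N}_h$, so the two collections of residuals $\{r_h(\pi_h(w_\mathcal{N})) : w_\mathcal{N} \in \mathcal{N}\}$ and $\{r_h(w_h) : w_h \in \mathcal{N}_h\}$ are the same subset of $V_h$. Taking the infimum of $\|\cdot\|_U$ over one identical set gives the other infimum; no property of $r_h$ is used beyond its being a well-defined map on $U_h$.

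For the second equality the key step is the union representation inherited from (\ref{eq:nn_manifold-span}): since $\pi_h$ is linear, $\mathcal{N}_h = \bigcup_{\pmb{\theta}_{hl} \in \mathbb{R}^{n_{hl}}} S_h(\pmb{\theta}_{hl})$, each $S_h(\pmb{\theta}_{hl}) \subseteq U_h$ being a last-layer interpolated space. Rewriting the infimum over a union as a double infimum, I obtain
\[
  \inf_{w_h \in \mathcal{N}_h} \|r_h(w_h)\|_U = \inf_{\pmb{\theta}_{hl} \in \mathbb{R}^{n_{hl}}} \, \inf_{w_h \in S_h(\pmb{\theta}_{hl})} \|r_h(w_h)\|_U.
\]
Next I would invoke the observation recorded after Theorem~\ref{th:fe-well-posed}, namely that the theorem applies verbatim to any subspace $W_h \subseteq U_h$ because the discrete inf--sup condition (\ref{eq:discrete-infsup}) is inherited by $W_h \times V_h$ with the same constant $\beta$. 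Applied with $W_h = S_h(\pmb{\theta}_{hl})$, this shows the inner infimum is attained and equals $\|r_h(u_h(S_h(\pmb{\theta}_{hl})))\|_U$, where $u_h(S_h(\pmb{\theta}_{hl})) \in S_h(\pmb{\theta}_{hl})$ solves (\ref{eq:feinn-weak-1})--(\ref{eq:feinn-weak-2}) on that subspace. Substituting and recalling $\mathcal{M}_h \doteq \{u_h(S_h(\pmb{\theta}_{hl})) : \pmb{\theta}_{hl} \in \mathbb{R}^{n_{hl}}\}$ closes the chain.

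The only point I would be careful about is that the inner problem is genuinely solved inside the correct space, i.e.\ that $u_h(S_h(\pmb{\theta}_{hl}))$ lies in $S_h(\pmb{\theta}_{hl})$ and is therefore a legitimate member of $\mathcal{M}_h$; this is exactly what the restricted form of Theorem~\ref{th:fe-well-posed} delivers. It is also worth noting that the interchange of infima over the union is unconditional --- it holds for an arbitrary family of sets --- so the argument requires neither closedness nor compactness of $\mathcal{N}_h$, consistent with the earlier caveat that $\mathcal{N}_h$ may fail to be closed.
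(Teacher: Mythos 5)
Your argument is correct and follows essentially the same route as the paper: the first equality from the definition of $\mathcal{N}_h$ as the image of $\pi_h$, and the second from the union representation \eqref{eq:nn_manifold-span} together with Theorem~\ref{th:fe-well-posed} applied to each last-layer interpolated subspace $S_h(\pmb{\theta}_{hl}) \times V_h$, whose unique minimiser belongs to $\mathcal{M}_h$ by definition. Your write-up simply makes explicit the interchange of infima over the union, which the paper leaves implicit.
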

\begin{proof}
The first equality is an obvious result of the definition for $\mathcal{N}_h$. The second equality uses the fact that $\mathcal{N}_h$ can be represented as the union of all last-layer spaces. Due to Th. \ref{th:fe-well-posed}, among all functions in $S_h$, there exists a unique solution $u_h(S_h)$ that minimises the functional. It is the solution of (\ref{eq:feinn-weak-1})-(\ref{eq:feinn-weak-2}) for $S_h \times V_h$ and its residual is denoted by $r_h(S_h)$. It proves the result.
\end{proof}
\begin{remark}
  To consider the minimisation on $\mathcal{M}_h$ is practically possible. It involves to solve a linear system for the last-layer weights of the \ac{nn}, e.g., using an iterative Krylov method. It motivates the usage of hybrid solvers that combine non-convex optimisation (e.g., Adam) and linear solvers. If not at every epoch, a final linear solve can be performed to obtain the optimal solution in the last-layer space.
\end{remark}

Let us consider the closure $\overline{\mathcal{N}_h}$ of $\mathcal{N}_h$ in $U$. $U_h$ is closed as it is a finite-dimensional vector space on a complete normed field ($\mathbb{R}$ or $\mathbb{C}$). Thus, $\overline{\mathcal{N}_h} \subseteq U_h$. We can prove the following result.

\begin{theorem}\label{eq:feinn-error}
  For any $\epsilon > 0$ and $\epsilon$-quasiminimiser $u_{\mathcal{N},\epsilon}$ of (\ref{eq:feinn_quasimin}), the following a priori error estimate holds:
  \begin{equation}
\| u - \pi_h(u_{\mathcal{N},\epsilon}) \|_U 
 \leq \rho \inf_{w_h \in \mathcal{N}_h} \| u - w_h \|_U + \beta \epsilon.
  \end{equation}
\end{theorem}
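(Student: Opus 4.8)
The plan is to reduce the statement to a single application of Proposition~\ref{prop:aux-two-spaces}, whose hypothesis compares the residual of one element of $U_h$ against the minimal residual over a \emph{subspace} $W_h \subseteq U_h$. The main obstacle is that $\mathcal{N}_h$ is not a linear subspace but a union of subspaces, namely the last-layer interpolated spaces in~(\ref{eq:nn_manifold-span}); hence Proposition~\ref{prop:aux-two-spaces} cannot be invoked directly with $W_h = \mathcal{N}_h$. I would circumvent this using the decomposition encoded in Proposition~\ref{prop:eq-min}, selecting the particular last-layer subspace that contains a near-best approximation of $u$.

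First I would translate the quasiminimiser condition into residual form. Since the analysis takes $X = U$, the discrete Riesz projector gives $\|\mathcal{R}_h(w_h)\|_{U'} = \|r_h(w_h)\|_U$ for every $w_h \in U_h$ [cf.~(\ref{eq:feinn_precond_loss}) and~(\ref{eq:fe-min})]. As the map $w_h \mapsto \|r_h(w_h)\|_U$ is continuous, the infimum over $\overline{\mathcal{N}_h}$ coincides with the infimum over $\mathcal{N}_h$, so~(\ref{eq:feinn_quasimin}), written for $y_h \doteq \pi_h(u_{\mathcal{N},\epsilon})$, becomes $\|r_h(y_h)\|_U \leq \inf_{w_h \in \mathcal{N}_h}\|r_h(w_h)\|_U + \epsilon$.

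Next I would fix $\delta > 0$ and choose $w_h^{\star} \in \mathcal{N}_h$ with $\|u - w_h^{\star}\|_U \leq \inf_{w_h \in \mathcal{N}_h}\|u - w_h\|_U + \delta$. By~(\ref{eq:nn_manifold-span}) and the linearity of $\pi_h$, this $w_h^{\star}$ lies in some last-layer interpolated subspace $S_h^{\star} = S_h(\pmb{\theta}_{hl}^{\star}) \subseteq U_h$. Let $u_h(S_h^{\star}) \in \mathcal{M}_h$ be the minimiser from Theorem~\ref{th:fe-well-posed} on $S_h^{\star} \times V_h$, with residual $r_h(S_h^{\star})$. Because $S_h^{\star} \subseteq \mathcal{N}_h$, Proposition~\ref{prop:eq-min} gives $\inf_{w_h \in \mathcal{N}_h}\|r_h(w_h)\|_U \leq \|r_h(S_h^{\star})\|_U$, and combining this with the previous bound yields $\|r_h(y_h)\|_U \leq \|r_h(S_h^{\star})\|_U + \epsilon$, which is exactly the hypothesis of Proposition~\ref{prop:aux-two-spaces} with $W_h = S_h^{\star}$.

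Finally I would apply Proposition~\ref{prop:aux-two-spaces} to conclude
\[
\|u - y_h\|_U \leq \rho \inf_{w_h \in S_h^{\star}}\|u - w_h\|_U + \beta\epsilon \leq \rho\,\|u - w_h^{\star}\|_U + \beta\epsilon \leq \rho\!\left(\inf_{w_h \in \mathcal{N}_h}\|u - w_h\|_U + \delta\right) + \beta\epsilon,
\]
where the middle inequality uses $w_h^{\star} \in S_h^{\star}$. Letting $\delta \downarrow 0$ completes the argument. The only delicate points are the manifold-to-subspace reduction described above and this limiting argument, which is needed precisely because the infimum over $\mathcal{N}_h$ need not be attained.
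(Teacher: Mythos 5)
Your proposal is correct and follows essentially the same route as the paper: both reduce the minimisation over the manifold $\mathcal{N}_h$ to the last-layer interpolated subspaces via Proposition~\ref{prop:eq-min} and then invoke Proposition~\ref{prop:aux-two-spaces} with such a subspace playing the role of $W_h$. Your explicit $\delta$-selection of a near-optimal subspace $S_h^{\star}$ and the limit $\delta \downarrow 0$ merely spell out the step the paper compresses into ``invoking Prop.~\ref{prop:aux-two-spaces} and (\ref{eq:nn_manifold-span})'', and is a welcome clarification since the infimum over $\mathcal{N}_h$ need not be attained.
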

\begin{proof}
Let us consider the minimisation problem on $\overline{\mathcal{N}_h}$. The infimum is attained since the space is closed. 

Thanks to Prop. \ref{prop:eq-min}, the minimum in (\ref{eq:feinn_inf_continuous_loss}) is attained for some $u_h \in \overline{\mathcal{M}_h}$.  
Thus, for any  $\epsilon$-quasiminimiser $u_{h,\epsilon}$ of (\ref{eq:feinn_quasimin}), it holds $\|r_h(u_{h,\epsilon})\|_U \leq \|r_h(u_h)\|_U + \epsilon \leq \|r_h(v_h)\|_U + \epsilon$ for any $v_h \in \mathcal{M}_h$.
Invoking Prop.~\ref{prop:aux-two-spaces} and (\ref{eq:nn_manifold-span}), we prove the result.
\end{proof}

\subsection{Quasi-emulation} \label{sec:quasi_emulation}

In the previous section, we have considered the case in which the \ac{nn} space cannot emulate the \ac{fe} space, i.e., $\mathcal{N}_h \subsetneq U_h$. If the \ac{nn} is expressive enough, we can prove that $\mathcal{N}_h = U_h$ following the construction in~\cite[Prop. 3.2]{Badia2024}.

\begin{proposition}\label{prop:emulation-1}
  Let us consider an adaptive Lagrangian \ac{fe} space $U_h$ in $\mathbb{R}^d$  with $N$ nodes, which can be defined on a non-conforming mesh and include hanging nodes. Let $\mathcal{N}$ be a neural network architecture with ReLU activation function, 3 hidden layers and $(3dN,dN,N)$ neurons per layer. The interpolation operator $\pi_h: \mathcal{N} \mapsto U_h$ is surjective.   
\end{proposition}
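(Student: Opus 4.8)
The plan is to exploit that $U_h$ is a nodal Lagrangian space, so that the interpolant $\pi_h$ is defined purely by evaluation at the $N$ nodes $\{x_j\}_{j=1}^N$ of $U_h$ (Sec.~\ref{subsec:method-fem}). Consequently, for $v\in\mathcal{C}^0$ the interpolant $\pi_h(v)$ depends only on the vector of nodal values $(v(x_1),\dots,v(x_N))\in\mathbb{R}^N$, and any $u_h\in U_h$ is recovered from its own nodal values. Surjectivity of $\pi_h:\mathcal{N}\to U_h$ is therefore equivalent to surjectivity of the evaluation map $v\mapsto(v(x_1),\dots,v(x_N))$ from $\mathcal{N}$ onto $\mathbb{R}^N$: it suffices to show that for arbitrary prescribed values $(c_1,\dots,c_N)$ there is a realisation $\mathcal{N}(\pmb{\theta})$ of the stated architecture with $\mathcal{N}(\pmb{\theta})(x_j)=c_j$ for all $j$. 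The non-conforming/hanging-node case requires no change: the construction below only uses that the $N$ nodes are pairwise distinct, so one simply takes $\{x_j\}$ to be the (master) nodes carrying the \acp{dof} of $U_h$.

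The construction follows~\cite[Prop.~3.2]{Badia2024} and produces a set of bumps $\{\phi_j\}_{j=1}^N$, realised as the last-layer functions of~\eqref{eq:nn_manifold-span}, with $\phi_j(x_i)=\delta_{ij}$; the output is then $\sum_{j=1}^N c_j\,\phi_j$, produced by the final linear map $\pmb{\Theta}_L$ with weights $c_j$ and zero bias, so that $\mathcal{N}(\pmb{\theta})(x_i)=c_i$. I build each $\phi_j$ as a ReLU ``AND'' of $d$ one-dimensional tents. Fix a half-width $\delta>0$ (to be chosen) and, for each node $j$ and coordinate $k\in\{1,\dots,d\}$, let $b_{j,k}$ be the clamped tent of the coordinate $x_k$ centred at $(x_j)_k$ with half-width $\delta$, i.e. $b_{j,k}(x)\doteq\max\!\big(0,\,1-\tfrac1\delta|x_k-(x_j)_k|\big)\in[0,1]$. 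As a continuous piecewise-linear function of a single coordinate with three breakpoints, $b_{j,k}$ is a combination of the three ramps $\rho(x_k-(x_j)_k+\delta)$, $\rho(x_k-(x_j)_k)$, $\rho(x_k-(x_j)_k-\delta)$; these $3dN$ ramps constitute the first hidden layer, and the nonnegative tent $b_{j,k}$ is recovered by the $dN$ units of the second hidden layer. The third hidden layer, of width $N$, forms $\phi_j\doteq\rho\big(\sum_{k=1}^d b_{j,k}-(d-1)\big)$, one neuron per node, so the widths $(3dN,dN,N)$ are matched exactly.

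It remains to verify the Kronecker property and to isolate the only quantitative step. At $x=x_j$ every tent is centred, so $b_{j,k}(x_j)=1$ and $\phi_j(x_j)=\rho\big(d-(d-1)\big)=1$. For $i\neq j$, since $x_i\neq x_j$ there is a coordinate with $(x_i)_k\neq(x_j)_k$, and one only needs that at least one such coordinate satisfies $|(x_i)_k-(x_j)_k|\geq\delta$, whence $b_{j,k}(x_i)=0$, $\sum_k b_{j,k}(x_i)\leq d-1$, and $\phi_j(x_i)=0$. This separation requirement is the crux of the argument and the only genuine obstacle, namely guaranteeing that coordinate-wise tents simultaneously localise a $d$-dimensional bump at each node. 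It is settled by a single constant: because there are finitely many distinct nodes, $g\doteq\min_{i\neq j}\max_k|(x_i)_k-(x_j)_k|>0$, and any $\delta\in(0,g)$ forces, for each pair $i\neq j$, some coordinate tent of $\phi_j$ to vanish at $x_i$. Hence $\phi_j(x_i)=\delta_{ij}$, the evaluation map is surjective onto $\mathbb{R}^N$, and $\pi_h:\mathcal{N}\to U_h$ is surjective. The ReLU ``AND'' $\rho\big(\sum_k b_{j,k}-(d-1)\big)$ is precisely what converts the per-coordinate separation into pointwise localisation within the budgeted three hidden layers.
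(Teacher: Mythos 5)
Your proof is correct and follows essentially the same route as the paper: the paper's argument (deferring to the construction in the cited Prop.~3.2 of the FEINN paper) is precisely to realise, as last-layer functions, a family whose interpolations are the nodal shape functions of $U_h$, and then to use the final linear map to reach any element of $U_h$; your ReLU ramp--tent--bump construction with $\phi_j(x_i)=\delta_{ij}$ is exactly this, with the added benefit of accounting explicitly for the $(3dN,dN,N)$ widths and for the positive separation constant $g$ that makes the coordinate-wise localisation rigorous. The remark that hanging-node constraints require no modification (since prescribing arbitrary values at all $N$ nodes in particular matches any $u_h\in U_h$ at the unconstrained ones) is also consistent with the paper's treatment.
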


\begin{proof}
  The proof follows the same lines as in~\cite[Prop. 3.2]{Badia2024}. The neural network above can represent in the last layer all the shape functions in the adaptive \ac{fe} space $U_h$, both the free ones as the ones that are constrained and associated to hanging nodes. The linear combination of these last layer shape functions can represent any function in $U_h$. As a result, the interpolation operator $\pi_h$ is surjective.   
\end{proof}

\begin{remark}
The previous result is overly pessimistic and can be improved when using meshes that are obtained after adaptive mesh refinement of a regular mesh. In this case, the \ac{fe} space $U_h$ can be represented by a \ac{nn} with a much smaller number of neurons, since one can exploit a tensor product structure in the proof (see~\cite{Badia2024}).
\end{remark}

Exact emulation is not attainable for some activation functions, like $\tanh$, regardless of the expressivity of the \ac{nn}. In this case, we introduce the concept of quasi-emulation.

  \begin{definition}
    A \ac{nn} is a $\delta$-emulator of a vector space $U_h$ if for all $w_h \in U_h$, there exists a $w \in \mathcal{N}$ such that $\| \pi_h(w) - w_h \|_U \leq \delta$. 
  \end{definition}

  \begin{example}\label{ex:tanh}
  Let us consider a 1D problem for simplicity and a fully-connected \ac{nn} as
in \sect{subsec:method-nn}
    with $\rho=\tanh$ activation, $m \in \mathbb{N}$ and the weights and biases:
  \begin{equation*}
      \pmb{W}_1 = \begin{bmatrix} m, \ -m \end{bmatrix}^T, \ \pmb{b}_1 = \begin{bmatrix} \frac{m}{4}, \ \frac{m}{4} \end{bmatrix}^T, \quad \pmb{W}_2 = \begin{bmatrix} m, \ m \end{bmatrix}, \ \pmb{b}_2 = \begin{bmatrix} 0 \end{bmatrix}.
  \end{equation*}
  One can check that the corresponding realisation $f_m({x}) = \rho \circ (\pmb{W}_2 (\rho \circ (\pmb{W}_1 {x} + \pmb{b}_1) + \pmb{b}_2))$ satisfies $f_m({x} = 0) = 1 - \delta$, $f_m({x} = -1) = f_m({x} = 1) = c\delta$  for some constant $c$, and monotonically decreases as $x$ goes to $\pm \infty$. Thus, $\lim_{m \to \infty} f_m(x)= h(x)$, where $h(x)$ is the hat function with support in $[-1,1]$ and value 1 at $x=0$. We can readily obtain a hat function in any interval by applying translation and/or scaling. 
  Thus, we can construct a \ac{nn} with a set of last-layer functions that emulates a \ac{fe} space basis up to a desired tolerance $\delta > 0$.
  Similar constructions are possible in higher dimensions.
\end{example}

Due to the well-posedness of the \ac{fe} problem in Th. \ref{th:fe-well-posed}, there is a unique minimum $u_h \in U_h = \overline{\mathcal{N}_h}$ of (\ref{eq:feinn-weak-1})-(\ref{eq:feinn-weak-2}). For exact emulation, $\mathcal{N}_h = \overline{\mathcal{N}_h}$. Thus, we can attain minimisers 
\begin{equation*} 
    u_{\mathcal{N}} \in \mathcal{N} \ : \ \pi_h(u_{\mathcal{N}}) \in 
    \argmin{w_{\mathcal{N},h} \in {\mathcal{N}}_h} \norm{\mathcal{R}_h(w_{\mathcal{N},h})}_{X'}.
\end{equation*}
In fact, the minimisers are all the $u_\mathcal{N} \in \mathcal{N}$ such that $\pi_h(u_\mathcal{N}) = u_h$ and the a priori error estimate (\ref{eq:apriori}) holds. For quasi-emulation, the following result is a direct consequence of Th. \ref{th:fe-well-posed} and the previous definition.
\begin{corollary}
  Let us consider a \ac{nn} $\mathcal{N}$ that is a $\delta$-emulator of the \ac{fe} space $U_h$. For any $\epsilon > 0$, an $\epsilon$-quasiminimiser $u_\mathcal{N}$ of (\ref{eq:feinn_quasimin}) satisfies the following a priori error estimate:
  \begin{equation}
\| u - \pi_h(u_{\mathcal{N}}) \|_U 
\leq \rho \inf_{w_h \in U_h} \| u - w_h \|_U + \beta \epsilon. 
  \end{equation}
\end{corollary}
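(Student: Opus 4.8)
The plan is to deduce the estimate from Proposition~\ref{prop:aux-two-spaces} applied with the admissible choice $y_h = \pi_h(u_\mathcal{N})$ and $W_h = U_h$. Note that $U_h$ is itself admissible in that proposition, since the inf-sup condition holds on $U_h \times V_h$, and that $\overline{\mathcal{N}_h} \subseteq U_h$. With this choice the conclusion reads $\| u - \pi_h(u_\mathcal{N}) \|_U \leq \rho \inf_{w_h \in U_h}\| u - w_h\|_U + \beta\,\epsilon'$, which is exactly the claimed form once the tolerance $\epsilon'$ entering its hypothesis $\| r(\pi_h(u_\mathcal{N}))\|_U \leq \| r_h(U_h)\|_U + \epsilon'$ is identified. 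Hence the whole proof reduces to verifying this hypothesis for a suitable $\epsilon'$.

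First I would let $u_h = u(U_h)$ be the unique minimiser over $U_h$ furnished by Theorem~\ref{th:fe-well-posed}, so that $\| r_h(u_h)\|_U = \| r_h(U_h)\|_U = \inf_{w_h \in U_h}\| r_h(w_h)\|_U$. Invoking the $\delta$-emulation property applied to $u_h$, there exists $w \in \mathcal{N}$ with $\| \pi_h(w) - u_h\|_U \leq \delta$. The essential link is then Proposition~\ref{prop:aux-r-u-bound}, whose upper (Lipschitz) bound gives $\| r_h(\pi_h(w)) - r_h(u_h)\|_U \leq \gamma \| \pi_h(w) - u_h\|_U \leq \gamma\delta$, and therefore $\| r_h(\pi_h(w))\|_U \leq \| r_h(U_h)\|_U + \gamma\delta$. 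Since $\pi_h(w) \in \mathcal{N}_h \subseteq \overline{\mathcal{N}_h}$, this yields $\inf_{w_{\mathcal{N},h} \in \overline{\mathcal{N}_h}}\| \mathcal{R}_h(w_{\mathcal{N},h})\|_{U'} \leq \| r_h(U_h)\|_U + \gamma\delta$.

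Next I would combine this with the defining property of the $\epsilon$-quasiminimiser, namely $\| r_h(\pi_h(u_\mathcal{N}))\|_U \leq \inf_{w_{\mathcal{N},h} \in \overline{\mathcal{N}_h}}\| \mathcal{R}_h(w_{\mathcal{N},h})\|_{U'} + \epsilon$, to obtain $\| r(\pi_h(u_\mathcal{N}))\|_U \leq \| r_h(U_h)\|_U + \gamma\delta + \epsilon$. This is precisely the hypothesis of Proposition~\ref{prop:aux-two-spaces} with tolerance $\gamma\delta + \epsilon$, and that proposition then delivers $\| u - \pi_h(u_\mathcal{N})\|_U \leq \rho \inf_{w_h \in U_h}\| u - w_h\|_U + \beta(\gamma\delta + \epsilon)$.

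The genuine mathematical content is therefore just this chaining of the three already-established results, and the step requiring care is the bookkeeping of $\delta$: the estimate I naturally obtain carries the extra additive term $\beta\gamma\delta$ compared with the displayed bound $\beta\epsilon$. The clean resolution is to note that $\delta$ and $\epsilon$ are both free tolerances, so $\beta\gamma\delta$ can be absorbed into the quasiminimisation tolerance (equivalently, one states the sharper estimate with tolerance $\beta(\gamma\delta+\epsilon)$ and recovers the displayed form in the exact-emulation limit $\delta \downarrow 0$). I would flag this as the only nontrivial point; everything else is a routine invocation of Propositions~\ref{prop:aux-r-u-bound} and~\ref{prop:aux-two-spaces}.
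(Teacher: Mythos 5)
Your argument is correct and is essentially the chain the paper leaves implicit: the paper offers no written proof (it declares the corollary ``a direct consequence of Th.~\ref{th:fe-well-posed} and the previous definition''), but since the constant $\rho$ only appears in Prop.~\ref{prop:aux-two-spaces}, the intended route is exactly yours --- use the $\delta$-emulation together with the Lipschitz bound of Prop.~\ref{prop:aux-r-u-bound} to compare $\inf_{\overline{\mathcal{N}_h}}\|r_h(\cdot)\|_U$ with $\|r_h(U_h)\|_U$, then invoke Prop.~\ref{prop:aux-two-spaces} with $W_h=U_h$. Your bookkeeping point is also genuine: for a \emph{fixed} $\delta>0$ the honest conclusion is $\rho\inf_{w_h\in U_h}\|u-w_h\|_U+\beta(\gamma\delta+\epsilon)$, and the displayed bound with only $\beta\epsilon$ is recovered exactly when the architecture is a $\delta$-emulator for \emph{every} $\delta>0$ (as in the $\tanh$ construction of Ex.~\ref{ex:tanh}), since then $\overline{\mathcal{N}_h}=U_h$ and the quasiminimiser property already gives $\|r_h(\pi_h(u_{\mathcal{N}}))\|_U\le\|r_h(U_h)\|_U+\epsilon$ with no $\gamma\delta$ term. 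That reading is consistent with the surrounding text, but your sharper statement with tolerance $\beta(\gamma\delta+\epsilon)$ is the one that matches the definition as literally written.
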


\subsection{Generalisation error and equivalence classes}\label{subsec:generalisation-error}

In the discussions above, we have only bounded the error of the interpolated \ac{nn}. Analogously, we have discussed the well-posedness in the space of interpolations of the \ac{nn}. The total error of the \ac{nn} is harder to bound, since it is \emph{transparent} to the loss.

In order to better bound the total error of the \ac{nn}, we can make use of quotient spaces (see a related approach in~\cite{Rojas2023} for \acp{vpinn}). Let us define the following equivalence relation $\sim$: two \ac{nn} realisations $\mathcal{N}(\pmb{\theta})$ and $\mathcal{N}(\pmb{\theta}')$ are equivalent if their interpolations on the adaptive \ac{fe} space $U_h$ are identical, i.e., $\pi_h(\mathcal{N}(\pmb{\theta})) = \pi_h(\mathcal{N}(\pmb{\theta}'))$. The equivalence class of $\mathcal{N}(\pmb{\theta})$ is denoted by $[\mathcal{N}(\pmb{\theta})]$. We can define the quotient space $\mathcal{N}/\sim$ as the set of all equivalence classes $[\mathcal{N}(\pmb{\theta})]$. 

Let us consider the case of exact emulation for simplicity. In this situation, the problem (\ref{eq:feinn_inf_continuous_loss}) admits a unique solution $u_h$ in $U_h$. Then, the problem is also well-posed in the quotient space $\mathcal{N} / \sim$; the unique solution is the equivalence class $[u_h] \in \mathcal{N} / \sim$. Analogously, in the case of quasi-emulation, quasi-minimisers in the quotient space will be arbitrarily close to $[u_{h,\epsilon}]$.

Assuming that the training process converges to the global minimiser, we can prove the following result.

\begin{proposition}\label{prop:accuracy}
  Let $u_h \in U_h$ be the solution of (\ref{eq:diffusion_fe_weak_form}). Let us assume that the training process has attained an $\epsilon$-quasiminimiser $u_{h,\epsilon} \in \mathcal{N}_h$, or analogously, a $[u_{h,\epsilon}] \in \mathcal{N} / \sim$. Any possible realisation $u_{\mathcal{N}} \in [u_{h,\epsilon}] \subset \mathcal{N}$ obtained after training satisfies the following a priori error result:
  \begin{equation}
    \norm{u - u_{\mathcal{N}}}_U \leq \norm{u - u_h}_U + \sup_{v \in [\pi_h(u_{\mathcal{N}})]} \norm{\pi_h^\perp(v)}_U + \epsilon,
  \end{equation}
  where $\pi_h^\perp(u) = u - \pi_h(u)$.
\end{proposition}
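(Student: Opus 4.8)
The plan is to decompose the total error into the part seen by the loss (the interpolated error) and the part that is transparent to it (the orthogonal component), and to control each separately. First I would write, by the triangle inequality,
\[
\norm{u - u_{\mathcal{N}}}_U \leq \norm{u - \pi_h(u_{\mathcal{N}})}_U + \norm{u_{\mathcal{N}} - \pi_h(u_{\mathcal{N}})}_U,
\]
and observe that the second summand is exactly $\norm{\pi_h^\perp(u_{\mathcal{N}})}_U$. Since $u_{\mathcal{N}} \in [u_{h,\epsilon}]$, the definition of the equivalence relation $\sim$ gives $\pi_h(u_{\mathcal{N}}) = u_{h,\epsilon}$, hence $[\pi_h(u_{\mathcal{N}})] = [u_{h,\epsilon}]$ and $u_{\mathcal{N}}$ itself belongs to this class. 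Therefore $\norm{\pi_h^\perp(u_{\mathcal{N}})}_U \leq \sup_{v \in [\pi_h(u_{\mathcal{N}})]} \norm{\pi_h^\perp(v)}_U$, which disposes of the transparent part with no further work.

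It remains to bound the interpolated error $\norm{u - \pi_h(u_{\mathcal{N}})}_U = \norm{u - u_{h,\epsilon}}_U$. I would insert the \ac{fe} solution $u_h$ and apply the triangle inequality once more, $\norm{u - u_{h,\epsilon}}_U \leq \norm{u - u_h}_U + \norm{u_h - u_{h,\epsilon}}_U$, so the first term is already the desired quantity and only $\norm{u_h - u_{h,\epsilon}}_U$ needs estimating. Here I would invoke Prop.~\ref{prop:aux-r-u-bound} with $u_{h,1} = u_{h,\epsilon}$ and $u_{h,2} = u_h$, both in $U_h$, to extract the lower bound $\tfrac{1}{\beta}\norm{u_{h,\epsilon} - u_h}_U \leq \norm{r(u_{h,\epsilon}) - r(u_h)}_U$. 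In the exact-emulation regime with $\dim U_h = \dim V_h$ considered here, the remark following Th.~\ref{th:fe-well-posed} gives that $u_h$ solving \eqref{eq:diffusion_fe_weak_form} has vanishing residual, $r(u_h) = 0$; moreover exact emulation makes $\overline{\mathcal{N}_h} = U_h$, so that $\inf_{w_h \in \overline{\mathcal{N}_h}}\norm{r_h(w_h)}_U = \norm{r(u_h)}_U = 0$. Reading the $\epsilon$-quasiminimiser property \eqref{eq:feinn_quasimin} in the Riesz-isometric primal norm, where $\norm{r_h(\cdot)}_U = \norm{\mathcal{R}_h(\cdot)}_{U'}$, then yields $\norm{r(u_{h,\epsilon})}_U \leq \epsilon$, whence $\norm{u_h - u_{h,\epsilon}}_U \leq \beta\epsilon$.

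Collecting the three estimates gives $\norm{u - u_{\mathcal{N}}}_U \leq \norm{u - u_h}_U + \sup_{v \in [\pi_h(u_{\mathcal{N}})]}\norm{\pi_h^\perp(v)}_U + \beta\epsilon$, which is the claimed bound after absorbing the harmless constant into the arbitrary tolerance $\epsilon$. I expect no genuine analytical obstacle: the argument is essentially two triangle inequalities plus the already-established stability estimate of Prop.~\ref{prop:aux-r-u-bound}. The points demanding care are conceptual rather than technical, namely correctly identifying $\pi_h(u_{\mathcal{N}})$ with $u_{h,\epsilon}$ and the class $[\pi_h(u_{\mathcal{N}})]$ with $[u_{h,\epsilon}]$ so that the supremum legitimately dominates $\norm{\pi_h^\perp(u_{\mathcal{N}})}_U$, and recognising that this supremum may be very large, even infinite, yet still renders the inequality meaningful, since it quantifies precisely how far a realisation may drift from its interpolation while remaining in the same fibre over $U_h$.
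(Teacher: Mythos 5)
Your argument is correct and is essentially the paper's own (one-sentence) proof — two applications of the triangle inequality, with the orthogonal component $\norm{\pi_h^\perp(u_{\mathcal{N}})}_U$ dominated by the supremum over the fibre $[\pi_h(u_{\mathcal{N}})]$ — except that you make explicit how the training-error term arises, which the paper leaves implicit. Note that your careful accounting via Prop.~\ref{prop:aux-r-u-bound} (using $r(u_h)=0$ under exact emulation) actually yields $\beta\epsilon$ rather than the bare $\epsilon$ appearing in the statement, which is consistent with the constant in Th.~\ref{eq:feinn-error}; your closing remark about absorbing the constant into the arbitrary tolerance is the honest way to reconcile this, and is not a gap in your reasoning.
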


\begin{proof}
  Using the triangle inequality and taking the supremum of the error between the \ac{nn} and the \ac{fe} solution in the \ac{nn} architecture, we prove the result.
\end{proof} 

The first term in the error estimate is the \ac{fe} error, the second term is the generalisation error, and the third term is a training error. The generalisation error describes the maximum distance between the \ac{fe} solution and the \ac{nn} realisations in the corresponding equivalence class. Since the loss function does not \emph{feel} $\|\pi_h^\perp(u_{\mathcal{N}})\|$, there is no explicit control on it. This error is related to the expressiveness of the \ac{nn} with respect to the \ac{fe} mesh. The more expressive the \ac{nn}, the larger the generalisation error can be, as it is well-known in the machine learning literature (over-fitting). Besides, if the \ac{nn} is too coarse, the emulation assumption will not hold and the \ac{feinn} will not be able to capture the \ac{fe} solution. 
\footnote{
This discussion motivates the usage of a regularisation term that controls the distance between the \ac{nn} and the \ac{fe} solution. For example, one could consider a term of the order $ h^{-2}\| \pi^\perp (u_{\mathcal{N}})\|_{L^2(\Omega)}$ or $ \| \pi^\perp (u_{\mathcal{N}})\|_{H^1(\Omega)}$, integrated with a high order quadrature. Such a term is weakly consistent, i.e., it does not affect the convergence of \acp{feinn} compared to \ac{fem}. This stabilisation resembles the so-called orthogonal subscale stabilisation common in the \ac{fe} literature to circumvent the inf-sup condition (see, e.g.,~\cite{Codina2017,Badia2012}). We have not considered stabilisation in this work.
}

The previous analysis is \ac{fe}-centric, in the sense that we consider the \ac{nn} error by comparing it with the \ac{fe} solution and use \ac{fe} theory to quantify the error. Even though the \ac{feinn} can capture the \ac{fe} solution with a much smaller number of parameters than the number of \acp{dof} in the \ac{fe} space~\cite{Badia2024}, it can be argued that if the only aim is to attain the \ac{fe} error, it is more effective to solve the \ac{fe} problem. However, the \ac{feinn} approach can naturally deal with inverse problems by simply adding data misfit terms to the loss function and the \ac{nn} parametrisation regularises the inverse problem and can readily be used in a one-shot fashion, avoiding the need of adjoint problems~\cite{Badia2024}. It has also been observed experimentally in~\cite{Badia2024} that the \ac{fe}-centric analysis of \acp{feinn} is overly pessimistic in general. The trained \ac{nn} using the \ac{feinn} approach can be up to two orders of magnitude more accurate than the \ac{fe} solution for forward problems. In \sect{sec:experiments}, we analyse this fact in the case of singular solution and sharp gradients combined with $h$-adaptivity and a posteriori error estimation. A mathematical justification of the superior accuracy of the \ac{feinn} trained \ac{nn} compared to the \ac{fem} solution is still open.

The two error estimates in Sec. \ref{subsec:method-errorind} are related to these two different perspectives. The a posteriori error estimate \eqref{eq:kelly_error} aims to reduce the error of the interpolated \ac{nn}, following the \ac{fe}-centric perspective. Instead, the estimate \eqref{eq:nn_error} aims to directly reduce the error between the \ac{nn} and the exact solution, considering the interpolation just as a way to enforce the Dirichlet condition, define a trial space to compute the weak residual and perform an approximate numerical integration.

\section{Implementation} \label{sec:implementation}

\subsection{Automatic mesh adaptation using forest-of-octrees} \label{subsec:auto_mesh_adapt}

Automatic mesh adaptation is a key component of $h$-adaptive \acp{feinn}.
In this work, we leverage a particular kind of hierarchically-adapted meshes known as 
forest-of-octrees (see, e.g.,~\cite{Burstedde2011,Badia2020}). 
Forest-of-octrees meshes can be seen as a two-level
decomposition of $\Omega$, referred to as macro and micro level, respectively. The macro level is a suitable {\em conforming} partition $\mathcal{C}$ of $\Omega$ into quadrilateral ($d=2$) or hexahedral cells ($d=3$). This mesh, which may be generated, e.g., using an unstructured mesh generator,  is referred to as the coarse mesh. At the micro level, each of the cells of $\mathcal{C}$ becomes the root of an adaptive octree with cells that can be recursively and dynamically refined or coarsened using the so-called $1:2^d$ uniform partition rule. If a cell is marked for refinement, then it is split into $2^d$ children cells by subdividing all parent cell edges. If all children cells of a parent cell are marked for coarsening, then they are collapsed into the parent cell. The union of all leaf cells in this hierarchy forms the decomposition of the domain at the micro level, i.e., $\mathcal{T}$.

In this work, we utilise the \texttt{GridapP4est.jl}~\cite{GridapP4est2024} Julia package in order to handle such kind of meshes.  
This package, built upon the \texttt{p4est} meshing engine~\cite{Burstedde2011}, is endowed with the so-called Morton space-filling curves, and it provides high-performance and low-memory footprint algorithms to handle forest-of-octrees.

\subsection{Non-conforming interpolation of neural networks} \label{subsec:nonconforming_interpolation}

Unlike our previous work~\cite{Badia2024}, which relies on (uniformly-refined) conforming meshes, in this work 
we interpolate the \ac{nn} onto (grad-conforming) \ac{fe} spaces built out of non-conforming forest-of-octrees meshes.
With this kind of meshes, generating a cell-conforming global enumeration of \acp{dof} is not sufficient to impose the required continuity at the interface of cells with different refinement levels. In order to remedy this, it is standard practice to equip the \ac{fe} space with additional linear multi-point constraints at the \acp{dof} lying on hanging vertices, edges and faces. The structure and set up of such constraints is well-established knowledge, and thus not covered here; see, e.g.,~\cite{Badia2020,Olm2019} for further details. In any case, the presence of these linear multi-point constraints in the \ac{fe} space implies that the interpolation of the \ac{nn} at hanging \acp{dof} cannot by given by its evaluation at the corresponding nodes, but rather by a linear combination of the \ac{nn} evaluations at the nodes of the adjacent coarser cell. To this end, \texttt{GridapP4est.jl} provides the required functionality to build and apply these linear multi-point constraints, and to resolve these constraints during \ac{fe} assembly.

\subsection{Finite element interpolated neural networks}
In this section we briefly overview the main building blocks of our Julia implementation of $h$-adaptive \acp{feinn}.
For a more complete coverage of these (including the inverse problem case), we refer to~\cite{Badia2024}. 
At a purely algebraic level, we can rewrite the preconditioned loss~\eqref{eq:feinn_precond_loss} as:
\begin{equation*}
    \mathscr{L}(\pmb{\theta}) = \norm{r_h(\pmb{\theta})}_X,
\end{equation*}
where {$r_h \in V_h$ is the discrete Riesz projection of the \ac{fe} residual}. The vector of \acp{dof} of $r_h$ is $\mathbf{B}^{-1} (\mathbf{A} \mathbf{u}_h - \mathbf{f})$, where $\mathbf{B}$ is the discretisation in the test space $V_h$ of the Gram matrix associated to the inner product in $U$, e.g., the Laplacian matrix for the Poisson problem, $\mathbf{A}$ is the coefficient matrix resulting from discretisation, $\pmb{\theta}$ is the parameters for the \ac{nn} $u_{\mathcal{N}}$, $\mathbf{u}_h$ is the vector of \acp{dof} of $U_h$, and $\mathbf{f}$ is the \ac{rhs} vector. 
We use  the abstract data structures in \texttt{Gridap.jl}~\cite{Gridap2020,Gridap2022} as implemented by \texttt{GridapP4est.jl} to compute $\mathbf{A}$, $\mathbf{f}$, and the \ac{fe} nodal coordinates $\mathbf{X}$ of those 
\acp{dof} which are not subject to linear multi-point constraints. We utilise \texttt{Flux.jl}~\cite{Flux2018,FluxOSS2018} to build $u_{\mathcal{N}}$ and evaluate $u_{\mathcal{N}}$ at $\mathbf{X}$ to compute $\mathbf{u}_h$. 
Without preconditioning, i.e., $\mathbf{B} = \mathbf{I}$, we can minimise the loss $\norm{\mathbf{A}\mathbf{u}_h(\pmb{\theta}) - \mathbf{f}}_\chi$ directly.
During training, we rely on \texttt{Zygote.jl}~\cite{Zygote2018} to compute the gradient of the loss with respect to $\pmb{\theta}$ (see~\cite{Badia2024} for details). We note that in the context of $h$-adaptive \acp{feinn}, we need to recompute $\mathbf{A}$, $\mathbf{f}$, and $\mathbf{u}_h$ at each adaptation step. {Furthermore, depending on the specific problem, these arrays can be stored cell-wise to improve efficiency.}

If a preconditioner is employed, i.e., $\mathbf{B} \neq \mathbf{I}$, we need to perform these two additional steps at each iteration:
\begin{enumerate}
    \item {Compute $r_h$ by solving $\mathbf{B}^{-1}(\mathbf{A}\mathbf{u}_h(\pmb{\theta})-\mathbf{f})$ on $V_h$}.
    \item Compute the preconditioned loss $\mathscr{L}(\pmb{\theta}) = \norm{r_h(\pmb{\theta})}_X$.
\end{enumerate}
The forward and backward passes of the above steps are both handled by \texttt{Gridap.jl}.
It is noteworthy that in step (1), we do not explicitly compute the inverse of $\mathbf{B}$, {but rather solve the linear system $\mathbf{B}\mathbf{r}_h = \mathbf{A} \mathbf{u}_h - \mathbf{f}$ to compute the vector of \acp{dof} $\mathbf{r}_h$ that uniquely determine $r_h \in  V_h$}.
Considering the problem linear, we can reuse the factorisation of $\mathbf{B}$, leading to linear complexity computational cost in the application of $\mathbf{B}^{-1}$ per iteration. Alternatively, as explored in~\cite{Badia2024}, we can use a cheaper preconditioner $\mathbf{B}$, such as the \ac{gmg} preconditioner, which has linear complexity in the number of \acp{dof} of the \ac{fe} space.
Besides, given that $r_h \in V_h$ and $V_h$ is a linearised \ac{fe} space, and the gradient of $r_h$ is piecewise constant, the computation of $\norm{r_h}_X$ is very cheap.
\section{Numerical experiments} \label{sec:experiments}

\tab{tab:hyperparameters} lists the \ac{nn} architectures and training iterations for the different problems tackled in this section. 
For 2D experiments, we adopt the \ac{nn} architecture in~\cite{Badia2024}, namely $L = 5$ layers and $n = 50$ neurons for each hidden layer. 
For 3D experiments, since we work with a more complex problem with more \acp{dof}, we increase the number of layers to $L=10$.
In addition, we use $\rho = \tanh$ as the activation function, the Glorot uniform method~\cite{Glorot2010} as the \ac{nn}  parameter initialisation strategy. We employ the \texttt{BFGS} optimiser in \texttt{Optim.jl}~\cite{Optimjl2018} to train the \acp{nn}, as recommended in~\cite{Badia2024}.
Moreover, in our preliminary experiments, we have observed that the results are generally robust to variations in \ac{nn} initialisations, provided that the initial mesh is sufficiently fine  and the \ac{nn} is adequately trained.
As a result, at each adaptation step, we heuristically set the number of the training iterations as a function of the number of \acp{dof} in the trial \ac{fe} space. In particular, the higher the number of \acp{dof}, the higher the number of training iterations. 
We acknowledge that advanced/automatic optimiser termination conditions could result in better accuracy/performance trade-offs than the ones shown in this section. However, developing such conditions is beyond the scope of this paper.

\begin{table}[h]
    \begin{tabular}{lllll}
        \toprule
        Section number and problem & \# layers & \# neurons & \# BFGS iterations & \ac{dof} milestones\\
        \hline
        \sect{subsec:arc_wavefront} 2D arc wavefront   & 5    & 50 & [500, 1000, 1500] & [5000, 10000]\\
        \sect{subsec:lshape} 2D L-shaped singularity &   5  & 50  &[3000, 4000, 5000] & [10000, 20000]\\
        \sect{subsec:singularities3d} 3D singularities &10 & 50 &[3000, 4000, 5000] & [50000, 100000]\\
        \toprule
    \end{tabular}
    \caption{Example of selected hyperparameter values for the three types of problems tested in this study. The numbers shown correspond to $k_U=4$ assuming no preconditioner. For different values of $k_U$ and/or preconditioner strategy, please see the corresponding subsection. The number of training iterations at each adaptation step are determined by \ac{dof} milestones (last column of the table). For instance, in the 2D arc wavefront problem, we use a fixed number of 500 iterations up to 5,000 \acp{dof}, 1,000 iterations within 5,000-10,000 \acp{dof}, and 1,500 iterations over 10,000 \acp{dof}.} \label{tab:hyperparameters}
\end{table}

Regarding the refinement and coarsening ratios, for simplicity, we just use fixed ratios for all adaptation steps, even though these ratios might be different for each step in practice. 
We use refinement ratio $\delta^{\rm r} = 0.15$ for 2D cases and $\delta^{\rm r} = 0.1$ for 3D cases. For both space dimensions, the coarsening ratio is chosen to be $\delta^{\rm c} = 0.01$. 
Besides, for the stopping criterion, we simply set a maximum number of adaptation steps: 7 for 2D problems and 4 for 3D problems.

We use the $L^2(\Omega)$ and $H^1(\Omega)$ error norms to evaluate the accuracy of the identified solution $u^{id}$:
\begin{equation*}
    e_{L^2(\Omega)}(u^{id}) = \ltwonorm{u - u^{id}}, \qquad 
    e_{H^1(\Omega)}(u^{id}) = \honenorm{u - u^{id}},
\end{equation*}
where $u$ is the true solution, $\ltwonorm{\cdot} = \sqrt{\int_\Omega |\cdot|^2}$, and $\honenorm{\cdot} = \sqrt{\int_\Omega |\cdot|^2 + |\pmb{\nabla}(\cdot)|^2}$. 
The integrals in these terms are evaluated with sufficient Gauss quadrature points to guarantee accuracy. 
It should be noted that $u^{id}$ can either be a \ac{nn} (i.e., $u_{\mathcal{N}}$) or its interpolation (i.e., $\tilde{\pi}_h(u_{\mathcal{N}}) + \bar{u}_h$) onto a suitable \ac{fe} space. In the different plots in this section, we label the \ac{nn} interpolation as ``FEINN'' and refer to the \ac{nn} itself with an additional tag ``\ac{nn} only''. From now on, we drop $\Omega$ in the notation for the norms for conciseness.

The unpreconditioned loss~\eqref{eq:feinn_discrete_loss} has proven effective in various cases, as shown in~\cite{Badia2024}. Therefore, we use it as the default loss function in the experiments. 
The preconditioned loss function~\eqref{eq:feinn_precond_loss} is only used in \sect{subsubsec:arc_preconditioner_study} to examine the effectiveness of the proposed preconditioning strategy.
With regards to the norm choice in the loss~\eqref{eq:feinn_discrete_loss}, we use the $\ell^1$ norm. We note that other norms can also be explored. For example, our previous work~\cite{Badia2024} shows that both $\ell^2$ and $\ell^1$ norms in~\eqref{eq:feinn_discrete_loss} produce similar results on (quasi-)uniform meshes. 
Besides, we explore different norm options in the preconditioned loss~\eqref{eq:feinn_precond_loss}, as detailed in \sect{subsubsec:arc_preconditioner_study}.

The main motivation of this work is to showcase the capability of the proposed method in handling PDEs with sharp gradients or singularities. For this purpose, we have considered standard tests on simple geometries. However, as demonstrated in our previous work~\cite{Badia2024}, the \ac{feinn} method can also be applied to problems posed on more complex domains, and $h$-adaptive \acp{feinn} inherit this capability seamlessly. 
To attack this kind of multi-scale problems, one can generate a forest-of-octrees built out 
of a coarse mesh $\mathcal{C}$ generated by an unstructured mesh generator such as, e.g., \texttt{Gmsh}~\cite{Gmsh2009}; see \sect{subsec:auto_mesh_adapt}.

\subsection{The 2D arc wavefront problem with sharp gradients}  \label{subsec:arc_wavefront}
\begin{figure}[h]
    \begin{subfigure}[t]{0.32\linewidth}
        \centering
        \includegraphics[height=0.88\textwidth]{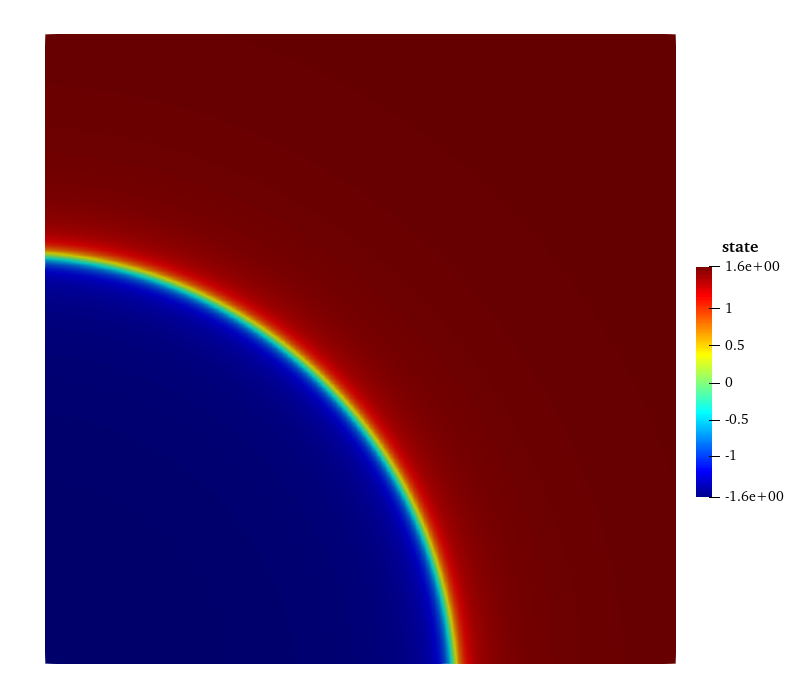}
        \caption{$u$}
        \label{fig:arc_true_state}
    \end{subfigure}
    \begin{subfigure}[t]{0.32\linewidth}
        \centering
        \includegraphics[height=0.88\textwidth]{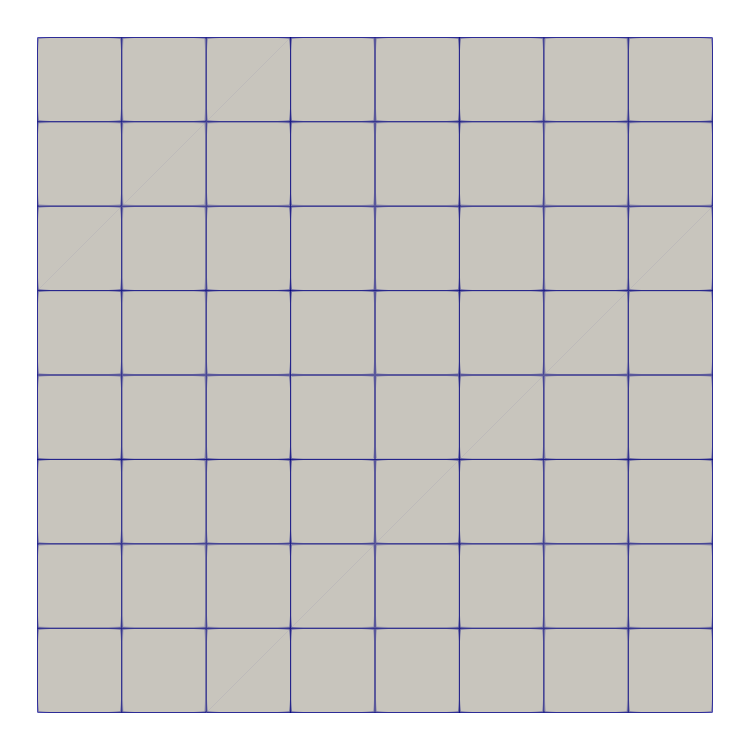}
        \caption{Initial mesh}
        \label{fig:arc_order4_init_mesh}
    \end{subfigure}
    \begin{subfigure}[t]{0.32\linewidth}
        \centering
        \includegraphics[height=0.88\textwidth]{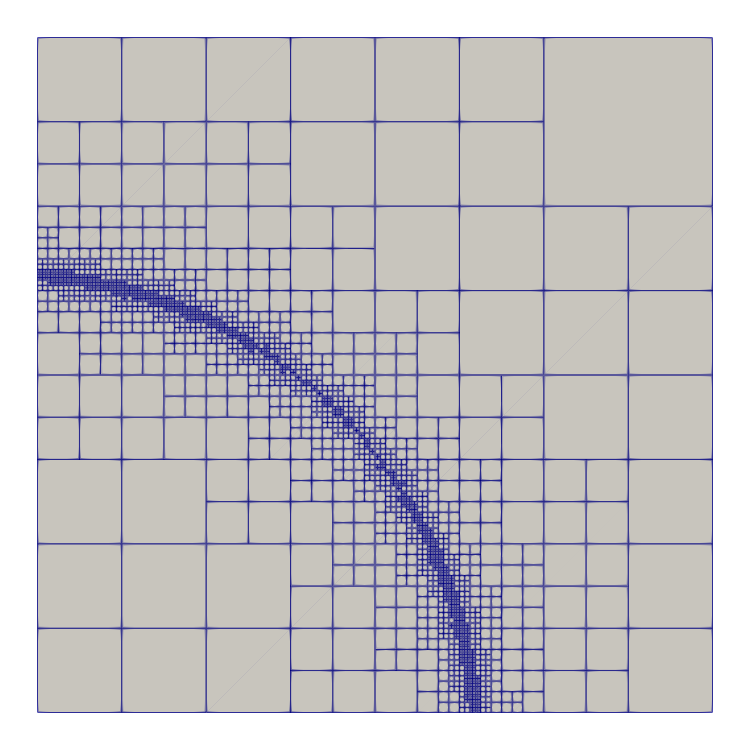}
        \caption{$h$-Adaptive \ac{fem} final mesh}
        \label{fig:arc_order4_fem_final_mesh}
    \end{subfigure}
     
    \caption{The true solution, initial mesh, and $h$-adaptive \ac{fem} final mesh for the 2D arc wavefront problem. The final mesh is obtained through $h$-adaptive \ac{fem} with 7 iterative adaptation steps using the real \ac{fem} error as the error indicator.}
    \label{fig:arc_state_and_meshes}
\end{figure}

Let us first consider a smooth problem with sharp gradients.  
Specifically, the problem is defined on a square domain $\Omega = [0, 1]^2$. We pick suitable $f$ and $g$ such that the exact solution is: 
\begin{equation*}
    u(x,y)= \arctan(100(\sqrt{(x+0.05)^2+(y+0.05)^2}-0.7)).
\end{equation*}
As shown in \fig{fig:arc_state_and_meshes}a, the true solution steeply varies in the neighbourhood of the arc-shaped front, resulting in sharp gradients along the arc. For the domain discretisation, we use a forest-of-quadtrees built out of a coarse mesh $\mathcal{C}$ with a single quadrilateral/adaptive quadtree.

We initiate the training of the \acp{nn} with $k_U=4$ on the initial mesh depicted in \fig{fig:arc_state_and_meshes}b. 
This mesh results from the application of three levels of uniform mesh refinement to $\mathcal{C}$.
For illustration purposes, we solve the problem using standard $h$-adaptive \ac{fem} with 7 mesh adaptation cycles, and the resulting mesh is shown in \fig{fig:arc_state_and_meshes}c. The error indicator for the $h$-adaptive \ac{fem} used to generate this mesh is the real \ac{fem} error.

\subsubsection{The comparison of error indicators} \label{subsubsec:arc_indicator_study}

In the first experiment, we investigate the impact of the choice of error indicator on the results. We repeat the experiment 20 times, each time with a different \ac{nn} initialisation, for every error indicator. The errors versus adaptation steps for both \acp{nn} and their interpolations are illustrated in \fig{fig:arc_indicator_study_error_convergence}. The solid line represents the median, and the band illustrates the range from the 0th to the 90th percentiles across 20 runs. The red dashed line corresponds to the $h$-adaptive \ac{fem} using the real \ac{fem} error for mesh adaptation.

\begin{figure}[h]
    \centering
    \includegraphics[width=\textwidth]{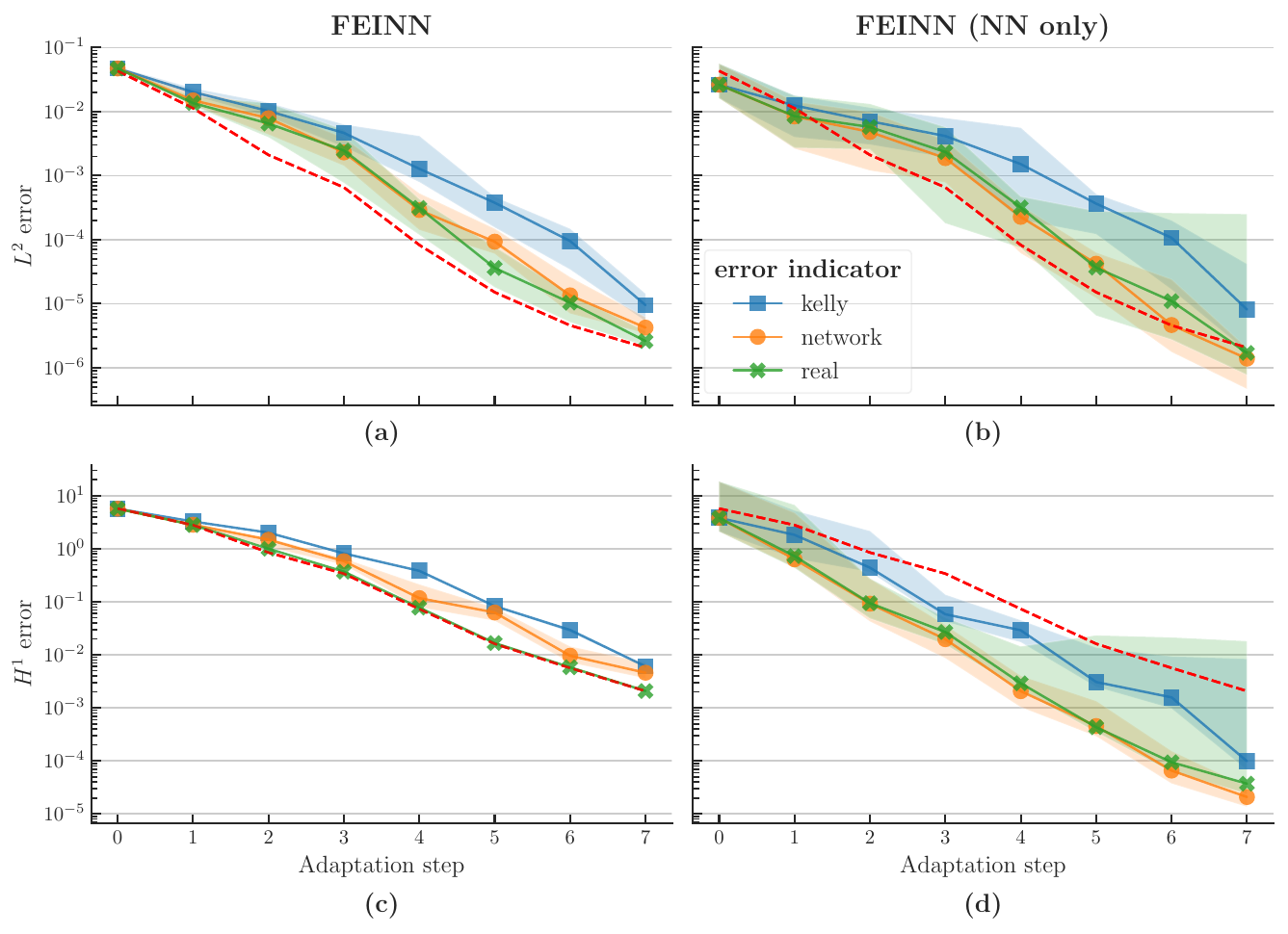}
    \caption{Convergence of $u^{id}$ in $L^2$ and $H^1$ errors for the 2D arc wavefront problem, using different error indicators. The top row shows $L^2$ errors and the bottom row represents $H^1$ errors. The first column displays errors of the interpolated \acp{nn} and the second column corresponds to errors of the \acp{nn} themselves. The line denotes the median, and the band represents the range from the minimum to the 90th percentile across 20 independent runs. The red dashed line illustrates the error of the $h$-adaptive \ac{fem} solution, using the real \ac{fem} error as the error indicator.}
    \label{fig:arc_indicator_study_error_convergence}
\end{figure}

Let us first comment on the results for the interpolated \acp{nn} in the first column of \fig{fig:arc_indicator_study_error_convergence}. When using the real error as the mesh adaptation criterion, both $L^2$ and $H^1$ errors are consistently the smallest at each adaptation step. Notably, for $H^1$ errors, the \ac{feinn} error line aligns with the one of $h$-adaptive \ac{fem}. 
The other two error indicators are also very effective for the interpolated \acp{nn}, as indicated by their stably decreasing $L^2$ and $H^1$ error lines.
Besides, the bands of the \ac{feinn} errors, regardless of the error indicator used, are very narrow, underscoring the robustness of \acp{feinn} with respect to different error indicators and \ac{nn} initialisations.

Regarding the errors in \acp{nn} themselves, as shown in the second column of \fig{fig:arc_indicator_study_error_convergence}, we observe that the majority of the $L^2$ errors from the 20 experiments are within an acceptable range. This is evident as all the lines (medians) corresponding to different error indicators closely align with those of the interpolated \acp{nn}. Nevertheless, the bands for $L^2$ and $H^1$ errors become noticeably wider, suggesting larger variance in the results. The network error indicator stands out for its robustness, as evidenced by the narrowest bands in \fig{fig:arc_indicator_study_error_convergence}b and ~\ref{fig:arc_indicator_study_error_convergence}d. In our previous work~\cite{Badia2024}, we have highlighted that for a smooth true solution, \acp{nn} have the potential to outperform the \ac{fem} solution on the same mesh. We observe a similar phenomenon here concerning $H^1$ errors: with real error indicator, \acp{nn} without interpolation can beat \ac{fem} solution (represented by the red dashed line) by approximately 2 orders of magnitude at the last adaptation step. For the remaining two error indicators, the majority of the $H^1$ errors fall below the \ac{feinn} line, indicating improved performance of the \acp{nn} compared to their interpolation.

In \fig{fig:arc_feinn_final_meshes}, we show the final meshes corresponding to the three different error indicators used in this study. 
We represent, for instance, the mesh adapted using the Kelly error indicator at adaptation step 7 as $\mathcal{T}_7^k$. In this notation, the letter in the superscript indicates the error indicator (``k'' for Kelly, ``n'' for ``network'', and ``r'' for ``real''), while the number denotes the adaptation step identifier. 
Notably, all error indicators effectively identify the region with sharp gradients and appropriately refine the mesh in that specific area. 
Nevertheless, differences also exist in these meshes.
Firstly, $\mathcal{T}_7^k$ (\fig{fig:arc_feinn_final_meshes}a) with 1,336 elements and 20,297 \acp{dof} has the fewest elements, accounting for the largest errors identified for Kelly in \fig{fig:arc_indicator_study_error_convergence} compared to other error indicators. This highlights that, in this problem, the network error indicator outperforms the Kelly error indicator.
Subsequently, $\mathcal{T}_7^r$ (\fig{fig:arc_feinn_final_meshes}c) with 2,146 elements and 32,893 \acp{dof} closely resembles the $h$-adaptive \ac{fem} final mesh with 2,119 elements and 32,457 \acp{dof} in \fig{fig:arc_feinn_final_meshes}c, elucidating the coincidence of the real indicator $H^1$ error line and the \ac{fem} line in \fig{fig:arc_indicator_study_error_convergence}c. 
Lastly, although $\mathcal{T}_7^n$ (\fig{fig:arc_feinn_final_meshes}b) with 2,293 elements and 35,089 \acp{dof} is as dense as $\mathcal{T}_7^r$ (\fig{fig:arc_feinn_final_meshes}c), the patterns are slightly different: $\mathcal{T}_7^n$ contains more elements in the middle of the arc but fewer elements at two ends of the arc, while the opposite holds true for $\mathcal{T}_7^r$. This discrepancy could explain the  differences observed in the convergence curves (\fig{fig:arc_indicator_study_error_convergence}) for real and network error indicators.

\begin{figure}[h]
    \begin{subfigure}[t]{0.32\linewidth}
        \centering
        \includegraphics[height=0.88\textwidth]{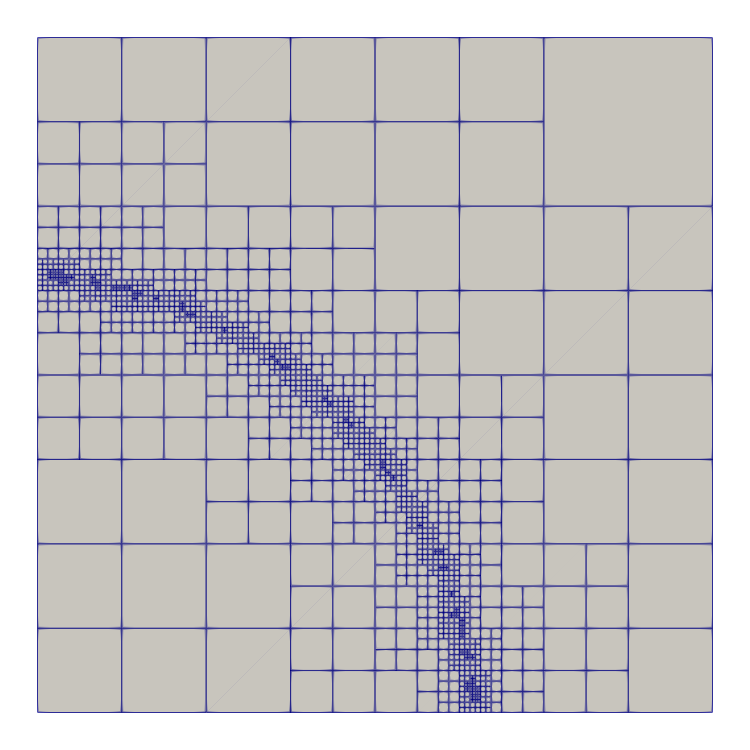}
        \caption{$\mathcal{T}_7^k$}
        \label{fig:arc_kelly_indicator_final_mesh}
    \end{subfigure}
    \begin{subfigure}[t]{0.32\linewidth}
        \centering
        \includegraphics[height=0.88\textwidth]{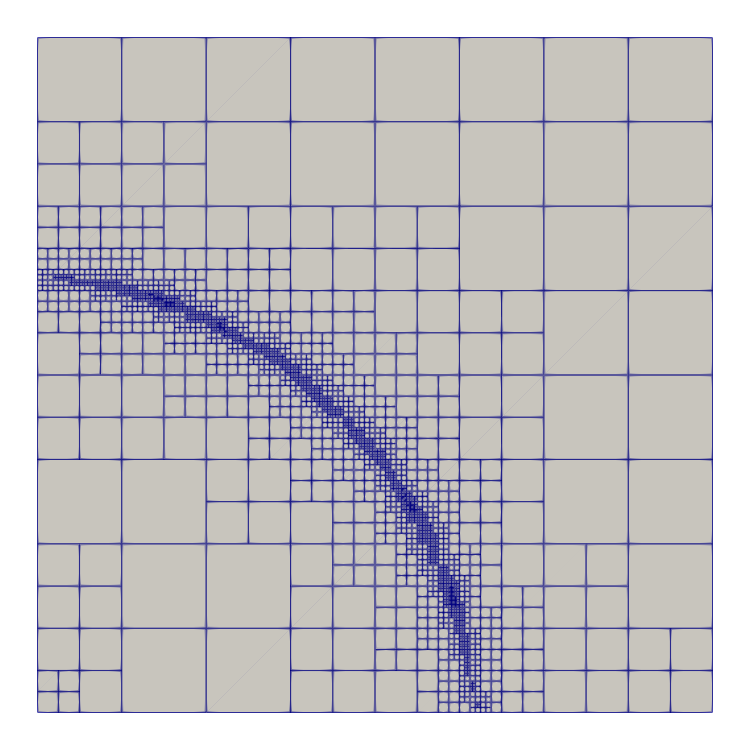}
        \caption{$\mathcal{T}_7^n$}
        \label{fig:arc_network_indicator_final_mesh}
    \end{subfigure}
    \begin{subfigure}[t]{0.32\linewidth}
        \centering
        \includegraphics[height=0.88\textwidth]{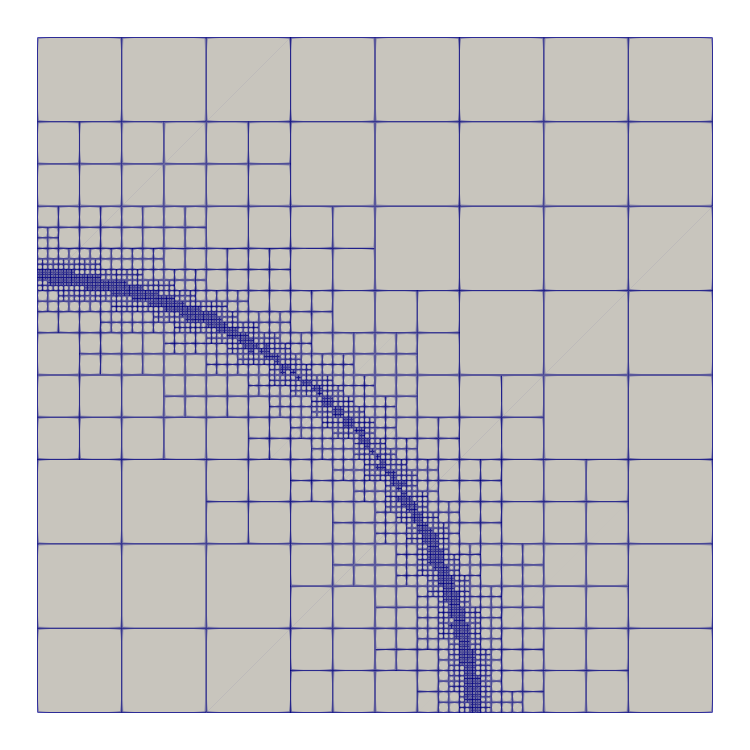}
        \caption{$\mathcal{T}_7^r$}
        \label{fig:arc_real_indicator_final_mesh}
    \end{subfigure}
     
    \caption{Comparison of final meshes obtained by training of the $h$-adaptive \acp{feinn} for the 2D arc wavefront problem. The meshes result from 7 mesh adaptation steps using Kelly, network, and real error indicators, respectively.}
    \label{fig:arc_feinn_final_meshes}
\end{figure}

\subsubsection{The effect of preconditioning} \label{subsubsec:arc_preconditioner_study}
In the  experiments so far, we have observed a notable  variance with respect to parameter initialisation in the errors corresponding to non-interpolated \acp{nn}, particularly when using Kelly and real error indicators. 
In this experiment, we investigate whether preconditioning can effectively enhance robustness (i.e., reduce this variance) and accelerate the training process. We concentrate on the impact of the preconditioner on the results using the Kelly error indicator, as accessing to the true error is often impossible in practice. Besides, we also note that 
the results obtained with the network error indicator are very similar to those obtained with the Kelly error indicator, thus we omit from this section the detailed results obtained with the former indicator for brevity.

We consider the preconditioner $\mathbf{B}_{\rm inv\_lin}$, which is defined as the discrete Laplacian resulting from discretisation with the linearised space $V_h$ used both as trial and test FE spaces. Thus, $\mathbf{B}_{\rm inv\_lin}$ is a sparse \ac{spd} matrix with reduced computational cost compared to a regular sparse matrix. Note that we apply $k_U$ levels of uniform refinement for the partition associated to $U_h$ to obtain the mesh for the \emph{linearised} test space $V_h$. 
Besides, in~\cite{Badia2024}, we have observed the effectiveness of a \ac{gmg} preconditioner as an approximation to the inverse of $\mathbf{B}_{\rm inv\_lin}$, suggesting its potential benefits for this problem as well. Nonetheless, for the purpose of this study, we just exactly invert $\mathbf{B}_{\rm inv\_lin}$ (by solving linear systems), as it is sufficient to demonstrate the impact of preconditioning.
In these experiments, we cut the number of training iterations by reducing those used in the unpreconditioned experiments by a factor of 4, except for the final adaptation step, where these are divided by a factor of 2. This adjustment accommodates for the increased demand for training on the finest meshes.

In addition to evaluating to what extent preconditioners can expedite convergence, we also investigate the impact of norm choice on the convergence. 
We train the same \ac{nn} (i.e., the same parameter initialisation) using the preconditioned loss~\eqref{eq:feinn_precond_loss} equipped with 4 different norms: the $L^1$ and $L^2$ norm of the discrete Riesz map of the residual and its gradient; we denote the $L^1$ and $L^2$ norms on the gradient as $W^{1,1}$ and $W^{1,2}$ norms, since $W^{k,p}$ represents the Sobolev norm with $k$ derivatives and $p$-th power. 
In \fig{fig:arc_precond_study_kelly_indicator_error_history}, we report the $L^2$ and $H^1$ errors as a function of  training iteration for the four norms. For reference, we also include the results for the unpreconditioned case, labelled as ``N/A''.
The first thing to note is that all the preconditioned curves are much steeper than the unpreconditioned ones, indicating that the application of the preconditioner significantly accelerates the convergence of both \acp{nn} and their interpolations.
Moreover, the $L^2$ convergence for both the \ac{nn} and its interpolation is highly unstable when a preconditioner is not applied, as evidenced by the error curves' fluctuating pattern. 
In contrast, the preconditioned error curves are all steadily decreasing, suggesting that the preconditioner effectively stabilises the training process.
This also reinforces the conclusion from our previous work~\cite{Badia2024} that using the $L^2$ norm in the preconditioned loss can accelerate the convergence of training in terms of $L^2$ error. 
Furthermore, the $H^1$ convergence of the interpolated \acp{nn} rapidly decreases in both unpreconditioned and preconditioned cases, as indicated by the stair-like pattern of the convergence curves.
However, upon comparing \fig{fig:arc_precond_study_kelly_indicator_error_history}c and \fig{fig:arc_precond_study_kelly_indicator_error_history}d, it becomes apparent that the convergence of \ac{nn} and its interpolation are not at the same pace. Additional iterations are necessary to ensure the convergence of the \ac{nn}, even if its interpolation has already reached a favourable position.
To sum up, when using the preconditioner $\mathbf{B}_{\rm inv\_lin}$, all the four norms in~\eqref{eq:feinn_precond_loss} are effective, with the $W^{1,1}$ norm being slightly better in terms of convergence speed and stability.

\begin{figure}[h]
    \centering
    \includegraphics[width=\textwidth]{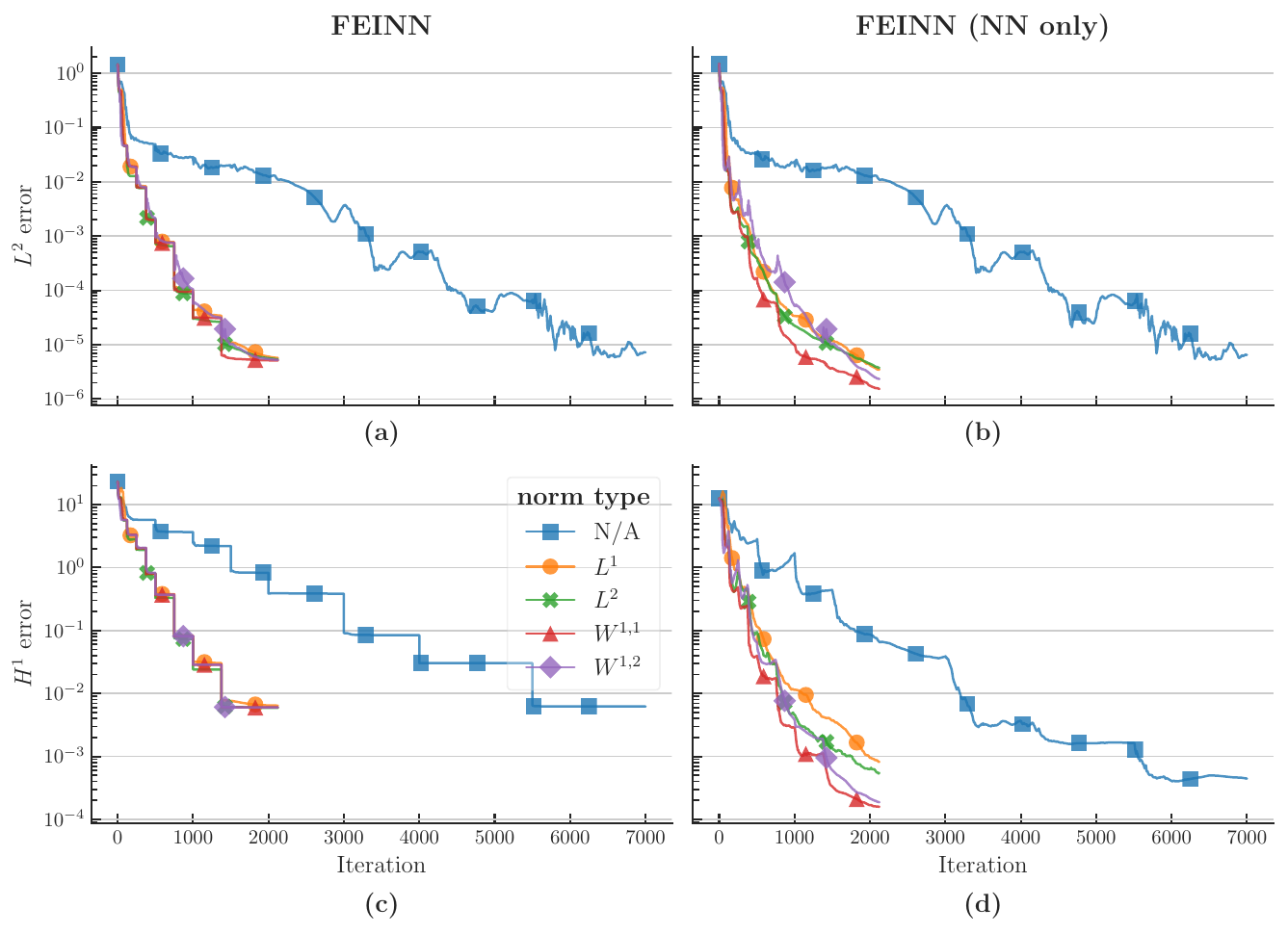}
    \caption{Convergence history of $u^{id}$ in $L^2$ and $H^1$ errors for the 2D arc wavefront problem, using different norms in the preconditioned loss during training. Kelly error indicator is used for mesh adaptation. The top row shows $L^2$ errors and the bottom row represents $H^1$ errors. The first column displays errors of the interpolated \acp{nn} and the second column corresponds to errors of the \acp{nn} themselves.}
    \label{fig:arc_precond_study_kelly_indicator_error_history}
\end{figure}

\begin{figure}[h]
    \centering
    \includegraphics[width=\textwidth]{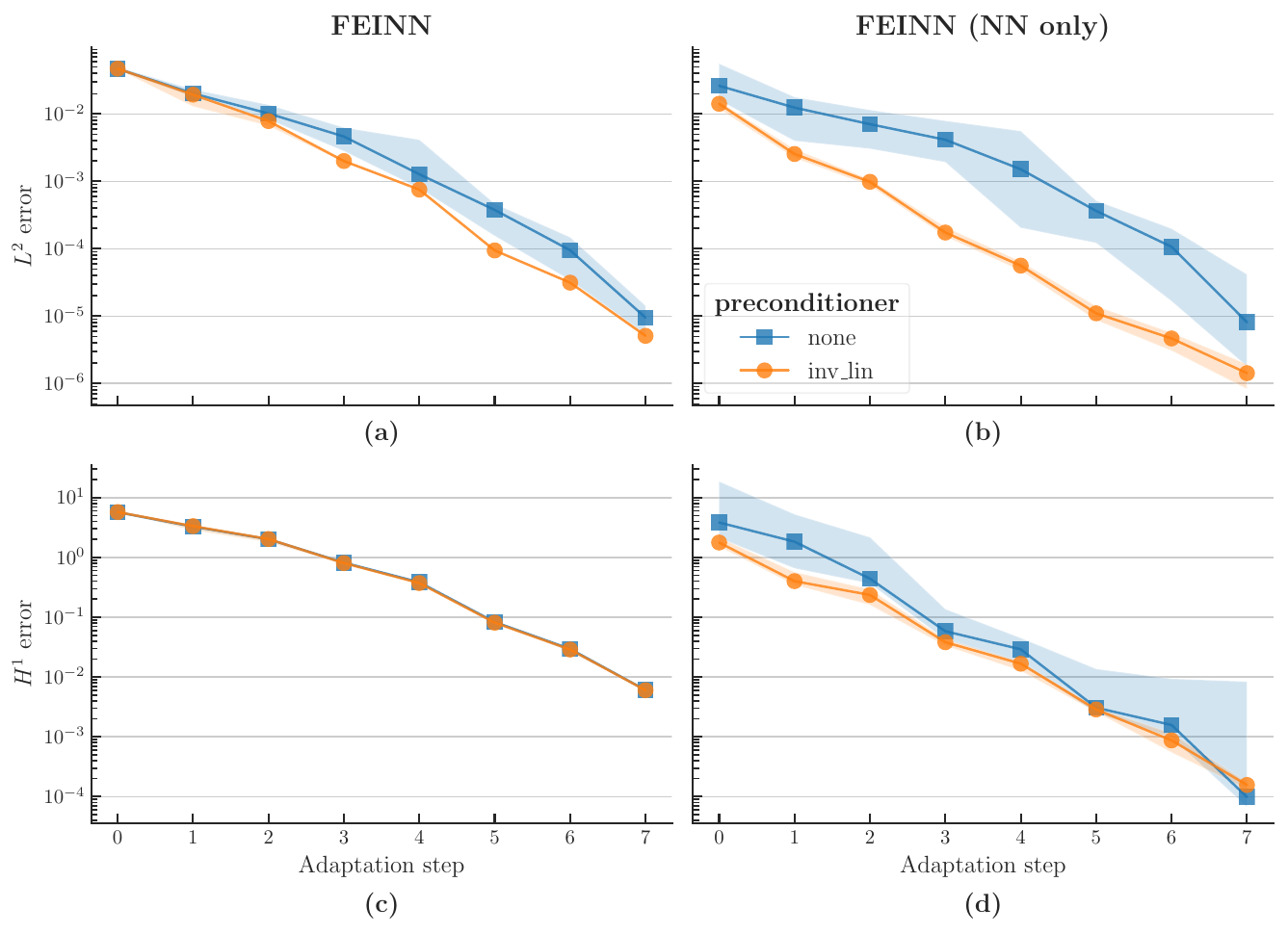}
    \caption{Convergence of $u^{id}$ in $L^2$ and $H^1$ errors for the 2D arc wavefront problem, using different preconditioners during training. Kelly error indicator is used for mesh adaptation. Refer to the caption of \fig{fig:arc_indicator_study_error_convergence} for details on the information being displayed in this figure.}
    \label{fig:arc_precond_study_kelly_indicator_error_convergence}
\end{figure}

We now study how preconditioning impacts training robustness with respect to \ac{nn} initialisation. We use the same 20 \ac{nn} initial parameters as in \sect{subsubsec:arc_indicator_study} and rerun the experiment using the preconditioned loss~\eqref{eq:feinn_precond_loss} equipped with the preconditioner $\mathbf{B}_{\rm inv\_lin}$ and the $W^{1,1}$ norm.
The error convergence using the preconditioned and unpreconditioned loss functions versus mesh adaptation step is depicted in \fig{fig:arc_precond_study_kelly_indicator_error_convergence}. The legend label ``none'' means no preconditioning, while ``inv\_lin'' represents that the preconditioner at hand is applied.
We can extract several key findings from the figure. Firstly, the application of $\mathbf{B}_{\rm inv\_lin}$ notably enhances the performance of both the \acp{nn} and their \ac{fe} interpolations. This is evident from the consistently lower position of the preconditioned lines compared to the unpreconditioned lines in \fig{fig:arc_precond_study_kelly_indicator_error_convergence}. Although the preconditioned \ac{feinn} $H^1$ results may not surpass their unpreconditioned counterparts, they exhibit close similarity, suggesting convergence to the same solution in both cases.
Secondly, the preconditioner $\mathbf{B}_{\rm inv\_lin}$ significantly improves the training robustness, evident in thinner bands for the preconditioned results, especially notable for the \acp{nn} without interpolation (second column in \fig{fig:arc_precond_study_kelly_indicator_error_convergence}). 
Lastly, $\mathbf{B}_{\rm inv\_lin}$ also helps \acp{nn} surpass their interpolations in terms of $L^2$ results, suggested by the lower position of the preconditioned line for \ac{nn} $L^2$ errors in \fig{fig:arc_precond_study_kelly_indicator_error_convergence}b compared to the \ac{feinn} $L^2$ errors in \fig{fig:arc_precond_study_kelly_indicator_error_convergence}a.

In summary, the proposed preconditioning strategy proves very effective for $h$-adaptive \acp{feinn}. It expedites the training process and enhances the robustness of the results with respect to \ac{nn} initialisations.

\subsection{The 2D Fichera problem with singularity} \label{subsec:lshape}
In this subsection, we consider the Fichera problem defined on the L-shaped domain $\Omega = [-1,1]^2 \backslash [-1,0]^2$. For the discretisation, we use a forest-of-quadtrees built out of a coarse mesh $\mathcal{C}$ with three quadrilateral/adaptive quadtrees. We pick $f$ and $g$ such that the true solution is:
\begin{equation*}
    u(r, \theta) = r^{\frac{2}{3}}\sin(\frac{2}{3}(\theta + \frac{\pi}{2})),
\end{equation*}
expressed in polar coordinates. 
The true solution and the domain are displayed in \fig{fig:lshape_state_and_meshes}a. Note that derivatives of $u$ are singular at the origin, in particular, $u \in H^{5/3 - \epsilon}(\Omega)$ for any $\epsilon > 0$.
We have tackled the same problem on the unit square in~\cite{Badia2024}, noting that while the interpolated \acp{nn} can recover the \ac{fem} solution on a fixed mesh, there is still room for their generalisation to be improved. Therefore, our goal in this experiment is to assess whether the proposed adaptive training strategy improves the \acp{nn}' generalisation.

\begin{figure}[h]
    \begin{subfigure}[t]{0.32\linewidth}
        \centering
        \includegraphics[height=0.88\textwidth]{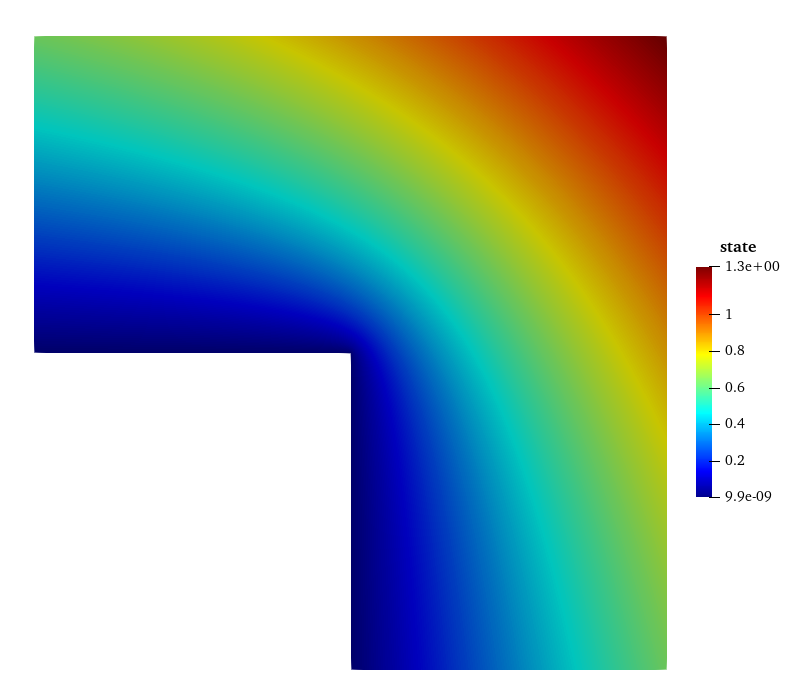}
        \caption{$u$}
        \label{fig:lshape_true_state}
    \end{subfigure}
    \begin{subfigure}[t]{0.32\linewidth}
        \centering
        \includegraphics[height=0.88\textwidth]{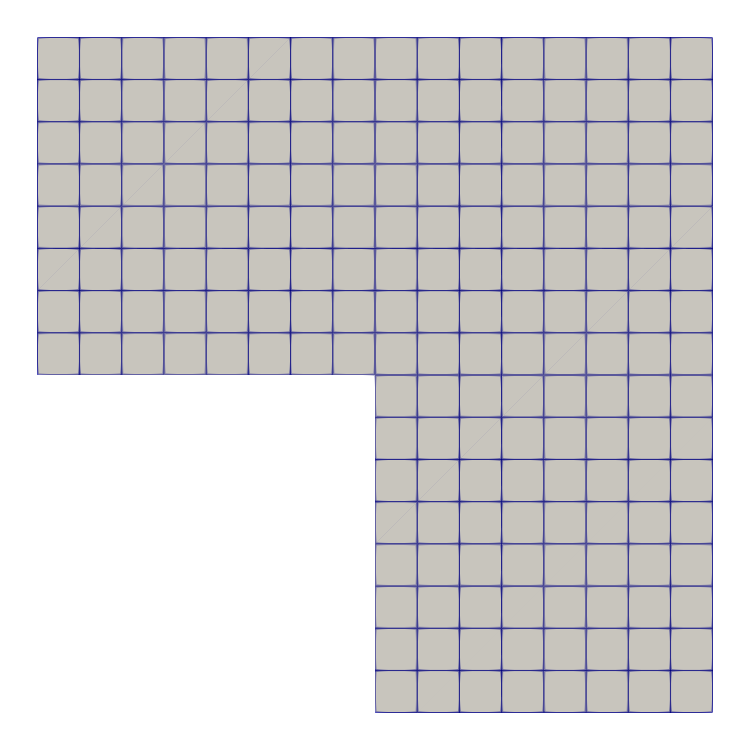}
        \caption{Initial mesh}
        \label{fig:lshape_order4_init_mesh}
    \end{subfigure}
    \begin{subfigure}[t]{0.32\linewidth}
        \centering
        \includegraphics[height=0.88\textwidth]{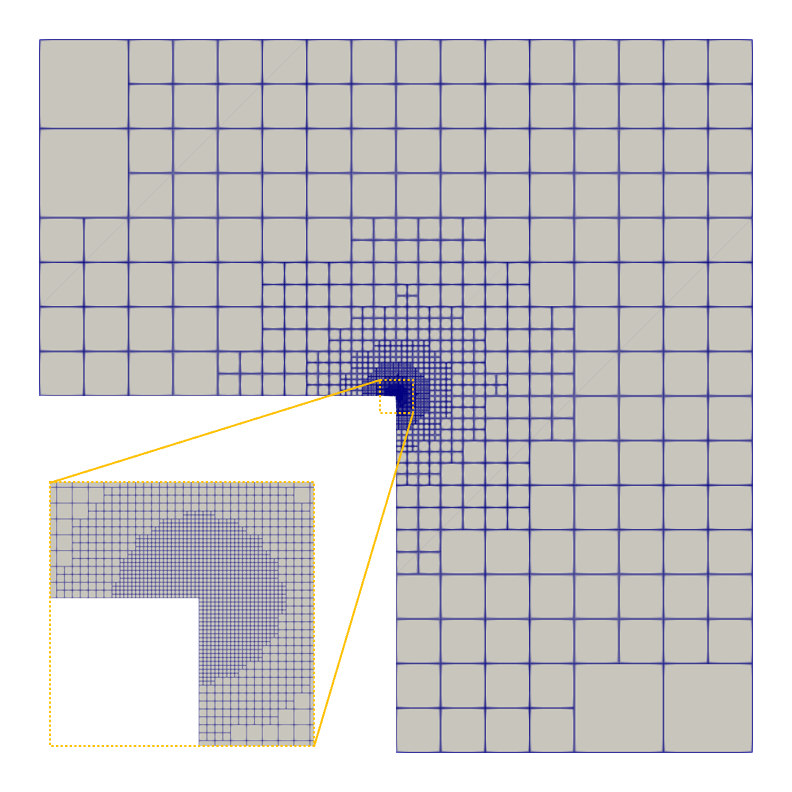}
        \caption{$h$-Adaptive \ac{fem} final mesh}
        \label{fig:lshape_order4_fem_final_mesh}
    \end{subfigure}
     
    \caption{The true solution, initial mesh, and $h$-adaptive \ac{fem} final mesh for the 2D problem with singularity defined on the L-shaped domain. The final mesh is obtained through the $h$-adaptive \ac{fem} with 7 mesh adaptation steps using the real \ac{fem} error as the error indicator for illustration purposes.}
    \label{fig:lshape_state_and_meshes}
\end{figure}

\subsubsection{The comparison of error indicators} \label{subsubsec:lshaped_indicator_study}

Let us first examine the effectiveness of all three error indicators. Starting with an initial mesh shown in \fig{fig:lshape_state_and_meshes}b (resulting from the application of 4 levels of uniform refinement to the initial coarse mesh $\mathcal{C}$), we perform 20 independent runs for each indicator, and present the errors versus the adaptation step in \fig{fig:lshape_indicator_study_error_convergence}. We choose $k_U=4$ in these experiments.  
The first observation from the figure is that the performance of all three error indicators is comparable, as the lines representing different indicators closely overlap.
Besides, for interpolated \acp{nn}, the distinction among different error indicators is visually indistinguishable. The $L^2$ error curves (\fig{fig:lshape_indicator_study_error_convergence}a) and $H^1$ error curves (\fig{fig:lshape_indicator_study_error_convergence}c) for all indicators closely follow the red dashed $h$-adaptive \ac{fem} line. The variance in errors at each adaptation step is also very low, as indicated by very narrow bands.
Additionally, the \acp{nn} exhibit strong generalisation in terms of $L^2$ error, with all lines in \fig{fig:lshape_indicator_study_error_convergence}c closely aligning with the \ac{fem} line and featuring very narrow bands.
More importantly, the $h$-adaptive \ac{feinn} method enables \acp{nn} to gradually learn the singularity and achieve the $h$-adaptive \ac{fem} rate of convergence, as shown in \fig{fig:lshape_indicator_study_error_convergence}d. Furthermore, the variance in the $H^1$ errors for \acp{nn} is minimal, as suggested by the narrow bands.
In brief, compared to the \ac{feinn} method defined on a fixed mesh in our previous work~\cite{Badia2024}, the $h$-adaptive \acp{feinn} method improves the performance of the \acp{nn} themselves and enable them to effectively capture the singularity, irrespective of the error indicator employed.

\begin{figure}[h]
    \centering
    \includegraphics[width=\textwidth]{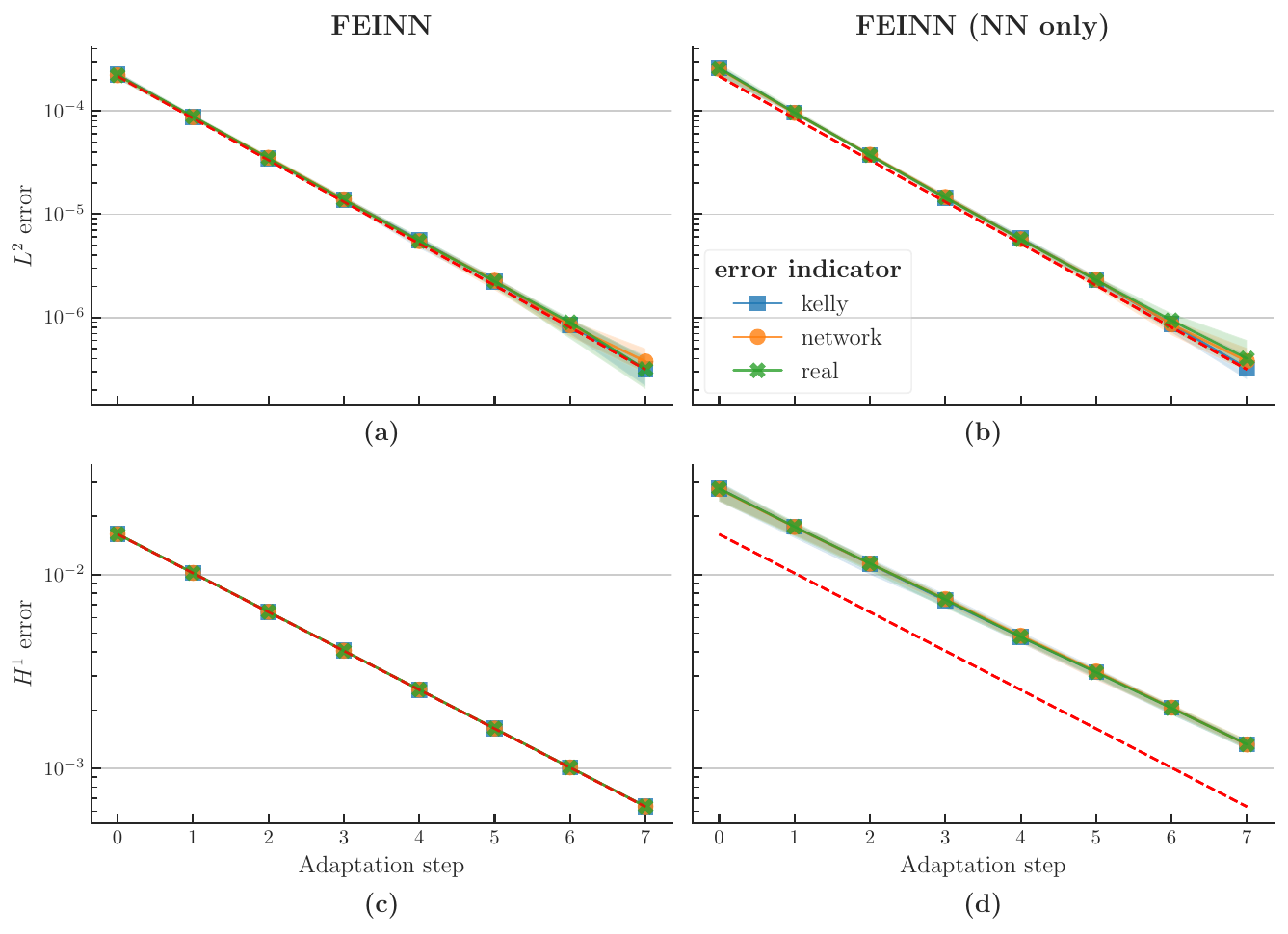}
    \caption{Convergence of $u^{id}$ in $L^2$ and $H^1$ errors for the 2D Fichera problem with singularity , using different error indicators. See caption of \fig{fig:arc_indicator_study_error_convergence} for details on the information being displayed in this figure.}
    \label{fig:lshape_indicator_study_error_convergence}
\end{figure}

In \fig{fig:lshape_feinn_final_meshes}, we show the final meshes obtained with different error indicators. 
Once again, we notice the striking similarity between $\mathcal{T}_7^r$ (\fig{fig:lshape_feinn_final_meshes}c) with 2,610 elements and 40,841 \acp{dof} and the $h$-adaptive \ac{fem} final mesh (\fig{fig:lshape_state_and_meshes}c) with 2,637 elements and 41,313 \acp{dof}.
Although the mesh patterns of $\mathcal{T}_7^k$ (\fig{fig:lshape_feinn_final_meshes}a) with 2,637 elements and 40,705 \acp{dof} and $\mathcal{T}_7^n$ (\fig{fig:lshape_feinn_final_meshes}b) with 2,703 elements and 41,781 \acp{dof} are different from $\mathcal{T}_7^r$, the elements around the singular point are refined to the same level across all meshes. This is clearer when comparing the zoomed view of the corner for the different meshes in these figures. Besides, the differences in the number of elements and \acp{dof} are negligible, suggesting that training cost is similar for all error indicators.
In the context of this problem and experiment, with high $k_U$ and finer initial mesh, the primary contribution to the $H^1$ errors arises from the singularity. 
Consequently, despite the differences in mesh patterns, the same element size around the singular point leads to very similar $H^1$ errors. This observation is reinforced by the remarkably similar $H^1$ convergence curves for different error indicators shown in \fig{fig:lshape_indicator_study_error_convergence}d.

\begin{figure}[h]
    \begin{subfigure}[t]{0.32\linewidth}
        \centering
        \includegraphics[height=0.88\textwidth]{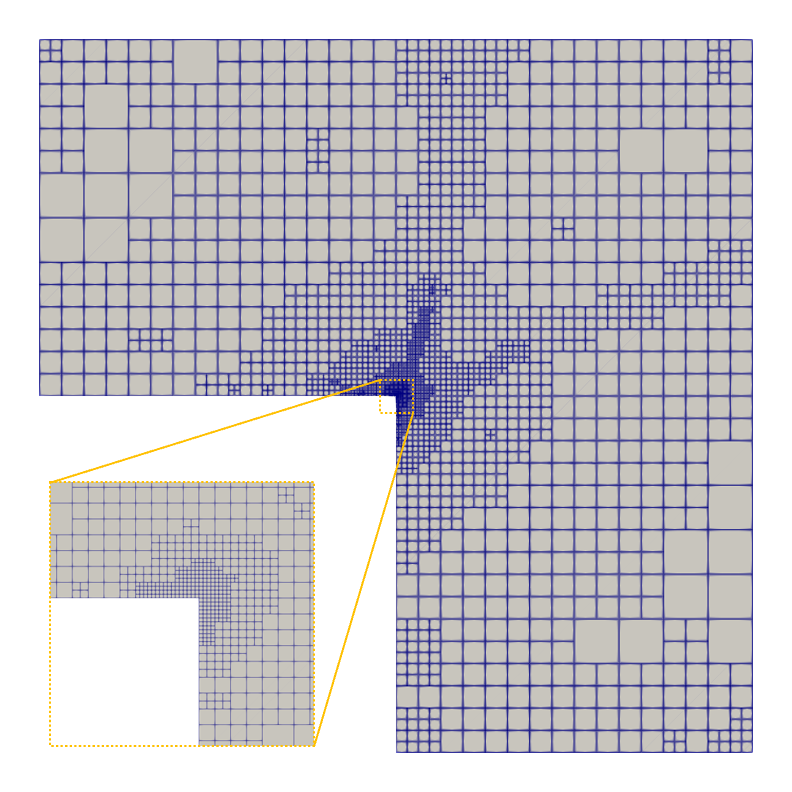}
        \caption{$\mathcal{T}_7^k$}
        \label{fig:lshape_kelly_indicator_final_mesh}
    \end{subfigure}
    \begin{subfigure}[t]{0.32\linewidth}
        \centering
        \includegraphics[height=0.88\textwidth]{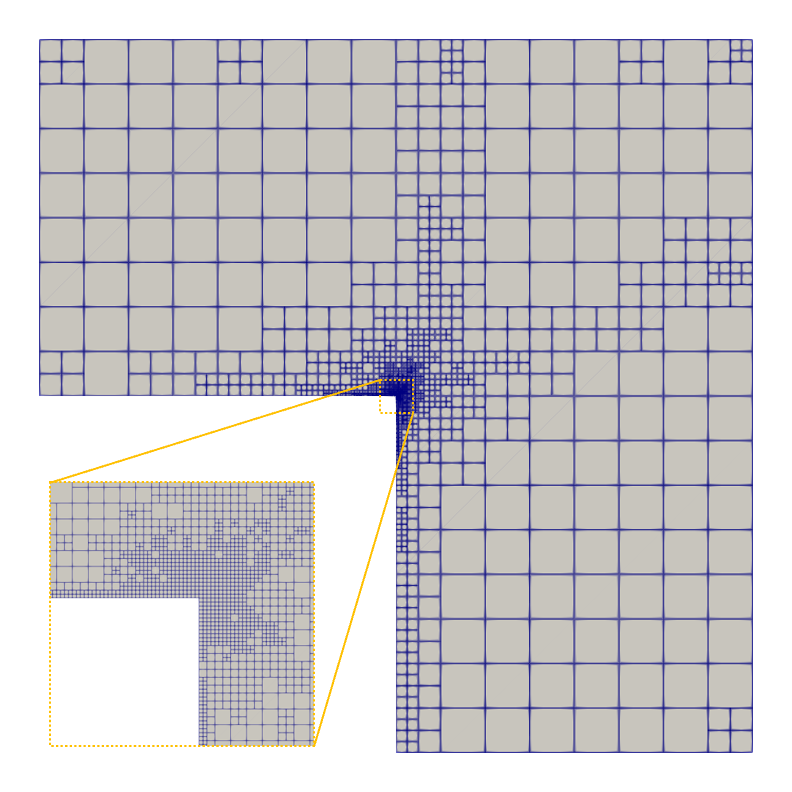}
        \caption{$\mathcal{T}_7^n$}
        \label{fig:lshape_network_indicator_final_mesh}
    \end{subfigure}
    \begin{subfigure}[t]{0.32\linewidth}
        \centering
        \includegraphics[height=0.88\textwidth]{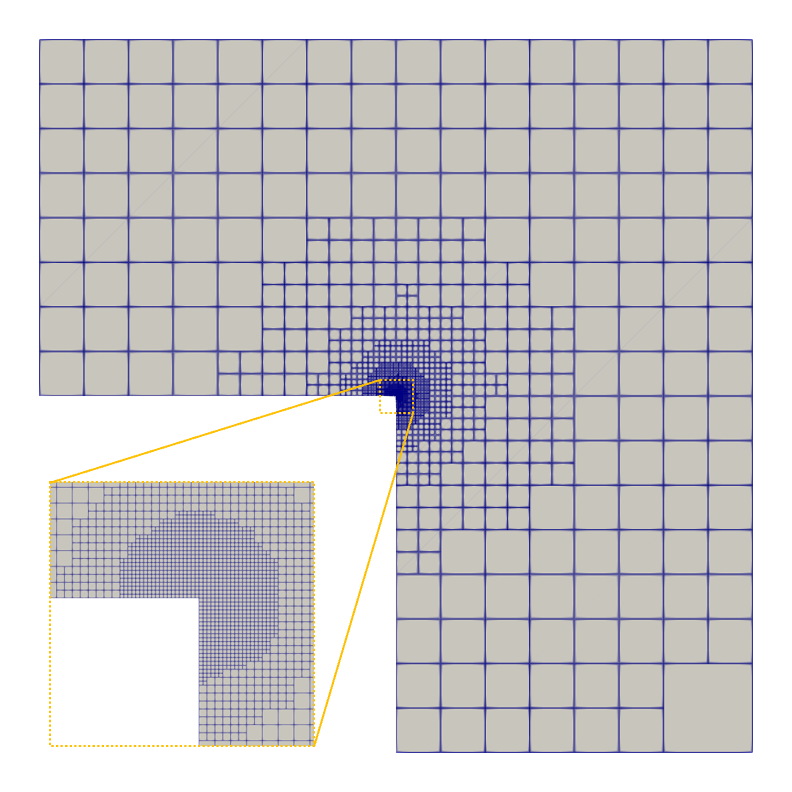}
        \caption{$\mathcal{T}_7^r$}
        \label{fig:lshape_real_indicator_final_mesh}
    \end{subfigure}
     
    \caption{Comparison of final meshes obtained by training of the $h$-adaptive \acp{feinn} for the 2D Fichera problem with singularity. The meshes result from 7 mesh adaptation steps using Kelly, network, and real error indicators, respectively.}
    \label{fig:lshape_feinn_final_meshes}
\end{figure}

\subsubsection{The effect of \ac{fe} space interpolation order} \label{subsubsec:lshaped_order_study}
So far, we have only conducted experiments using high interpolation order, i.e., $k_U=4$. We now extend our exploration to lower polynomial orders in solving the same problem, assessing the impact of $k_U$ on training results, while comparing the error decay of $h$-adaptive \acp{feinn} with \ac{fem} on uniformly refined meshes.
We experiment using the network error indicator and $k_U = 2$. To ensure a fair comparison, the \acp{dof} of the initial trial \ac{fe} spaces are kept constant across different values of $k_U$. This results in a finer mesh for $k_U=2$ compared to $k_U=4$. 
Besides, from our preliminary experiments, we find that a lower $k_U$ requires fewer training iterations. Therefore, for $k_U=2$, we determine the training iterations to be half of the iterations for $k_U=4$ (specified in \tab{tab:hyperparameters}).
It is important to point out that we exclude $k_U=1$ experiments, as the approximation of a smooth \ac{nn} with a linear \ac{fe} space is ineffective, thus making such experiments not very meaningful.

\fig{fig:lshape_network_indicator_order_study_error_convergence} compares the errors of the \acp{nn} and their \acp{feinn} versus number of \acp{dof} for various $k_U$. We use different colours to distinguish between orders in the figure.
It is evident that $h$-adaptive \acp{feinn} can more effectively reduce errors compared to uniform mesh refinement \ac{fem}, as all the solid lines eventually fall below the dashed lines representing the convergences of uniform \ac{fem}. 
Besides, we observe that the choice of $k_U$ has a limited impact on the convergence order. Across different $k_U$, the \ac{fem} lines exhibit almost identical slope, with only small distances between them. Similarly, the lines representing different $k_U$ in both \ac{feinn} and \ac{nn} results are also very close to each other.
In addition, for both interpolation orders, the $L^2$ and $H^1$ convergences of both \ac{feinn} and \ac{nn} are satisfactory. 
It is worth noting that the results in \fig{fig:lshape_network_indicator_order_study_error_convergence} are reproducible with regards to \ac{nn} initialisations and error indicators. Numerous independent runs with different \ac{nn} initial parameters and error indicators have validated the consistency of these results. We skip other similar results in the text for conciseness.

\begin{figure}[h]
    \centering
    \includegraphics[width=\textwidth]{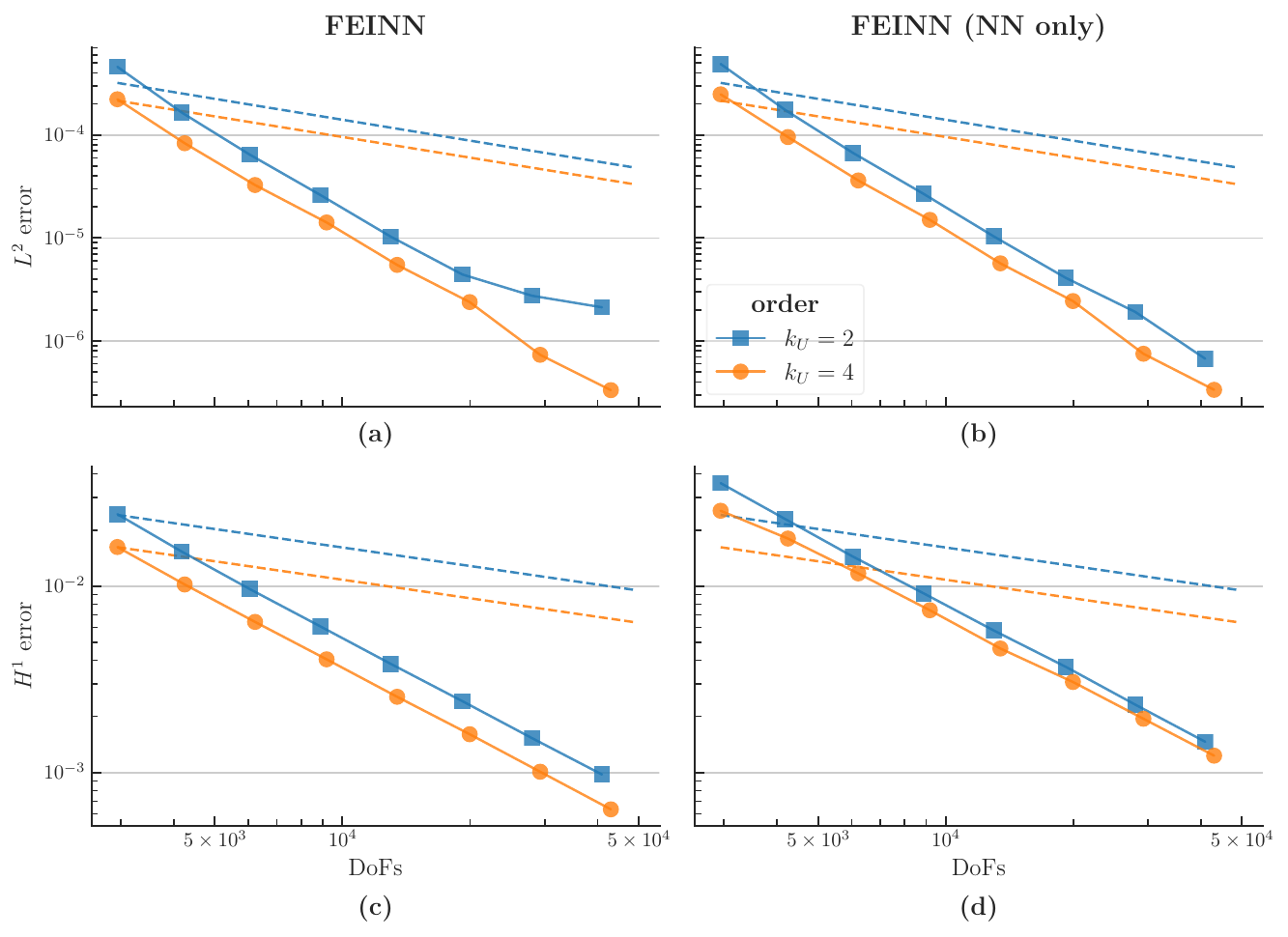}
    \caption{Convergence of $u^{id}$ in $L^2$ and $H^1$ errors for the 2D Fichera problem with singularity, using different interpolation orders. Network error indicator is used for mesh adaptation. The dashed lines depict the errors of the \ac{fem} solution with uniform refinement, obtained by using different polynomial orders. See caption of \fig{fig:arc_indicator_study_error_convergence} for other details in this figure.}
    \label{fig:lshape_network_indicator_order_study_error_convergence}
\end{figure}

\subsection{The 3D problem with singularities} \label{subsec:singularities3d}

\begin{figure}[h]
    \begin{subfigure}[t]{0.32\linewidth}
        \centering
        \includegraphics[height=0.88\textwidth]{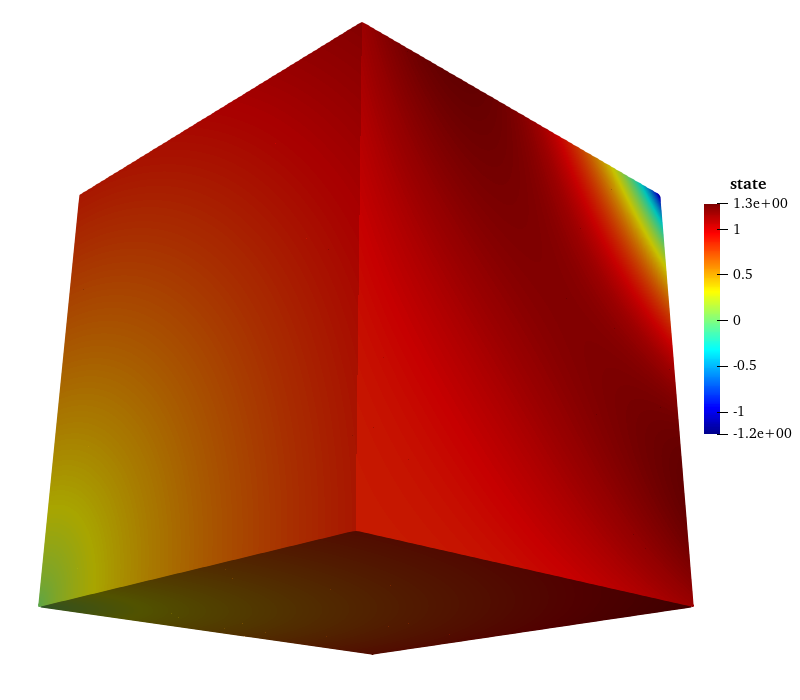}
        \caption{$u$}
        \label{fig:singular3d_true_state}
    \end{subfigure}
    \begin{subfigure}[t]{0.32\linewidth}
        \centering
        \includegraphics[height=0.88\textwidth]{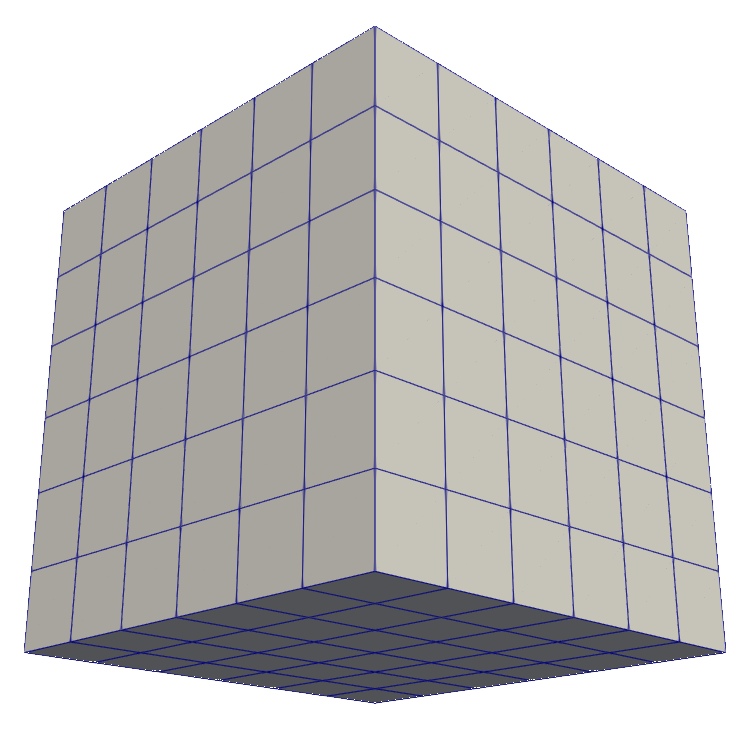}
        \caption{Initial mesh}
        \label{fig:singular3d_order4_init_mesh}
    \end{subfigure}
    \begin{subfigure}[t]{0.32\linewidth}
        \centering
        \includegraphics[height=0.88\textwidth]{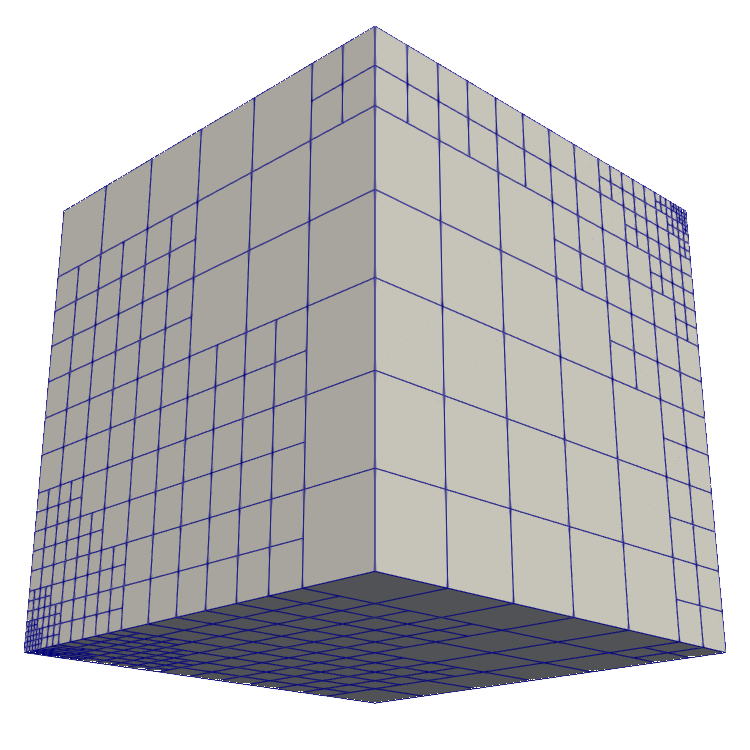}
        \caption{$h$-Adaptive \ac{fem} final mesh}
        \label{fig:singular3d_order4_fem_final_mesh}
    \end{subfigure}
     
    \caption{The true solution, initial mesh, and $h$-adaptive \ac{fem} final mesh for the 3D problem featuring singularities. The final mesh is obtained through the $h$-adaptive \ac{fem} with 4 mesh adaptation steps using the real \ac{fem} error as the error indicator.}
    \label{fig:singular3d_state_and_meshes}
\end{figure}

In our final experiment, we showcase the capability of our method in addressing 3D problems by considering a \ac{pde} defined on the unit cube $\Omega = [0,1]^3$. We consider an initial mesh $\mathcal{C}$ of $3^3$ cells. 
We pick $f$ and $g$ such that the true solution (\fig{fig:singular3d_state_and_meshes}a) is:
\begin{equation*}
    u(x, y, z) = \sqrt[3]{x^2+y^2+z^2} \sin(2\arcsin(xyz) + \pi / 3).
\end{equation*}
Note that the derivative of $u$ has two singularities: one at the origin, stemming from the $\sqrt[3]{x^2+y^2+z^2}$ term, and the other at the point $[1,1,1]$, coming from the $\arcsin(xyz)$ term. Thus, the final mesh is expected to be denser around these two singular points, as shown in \fig{fig:singular3d_state_and_meshes}c.
We use $k_U=4$ and start with the initial mesh shown in \fig{fig:singular3d_state_and_meshes}b, performing 4 mesh adaptation steps. We note that the initial mesh is obtained from the application of 1 level of uniform mesh refinement to $\mathcal{C}$. For each error indicator, we conduct 10 independent runs with different initial \ac{nn} parameters.

\begin{figure}[h]
    \centering
    \includegraphics[width=\textwidth]{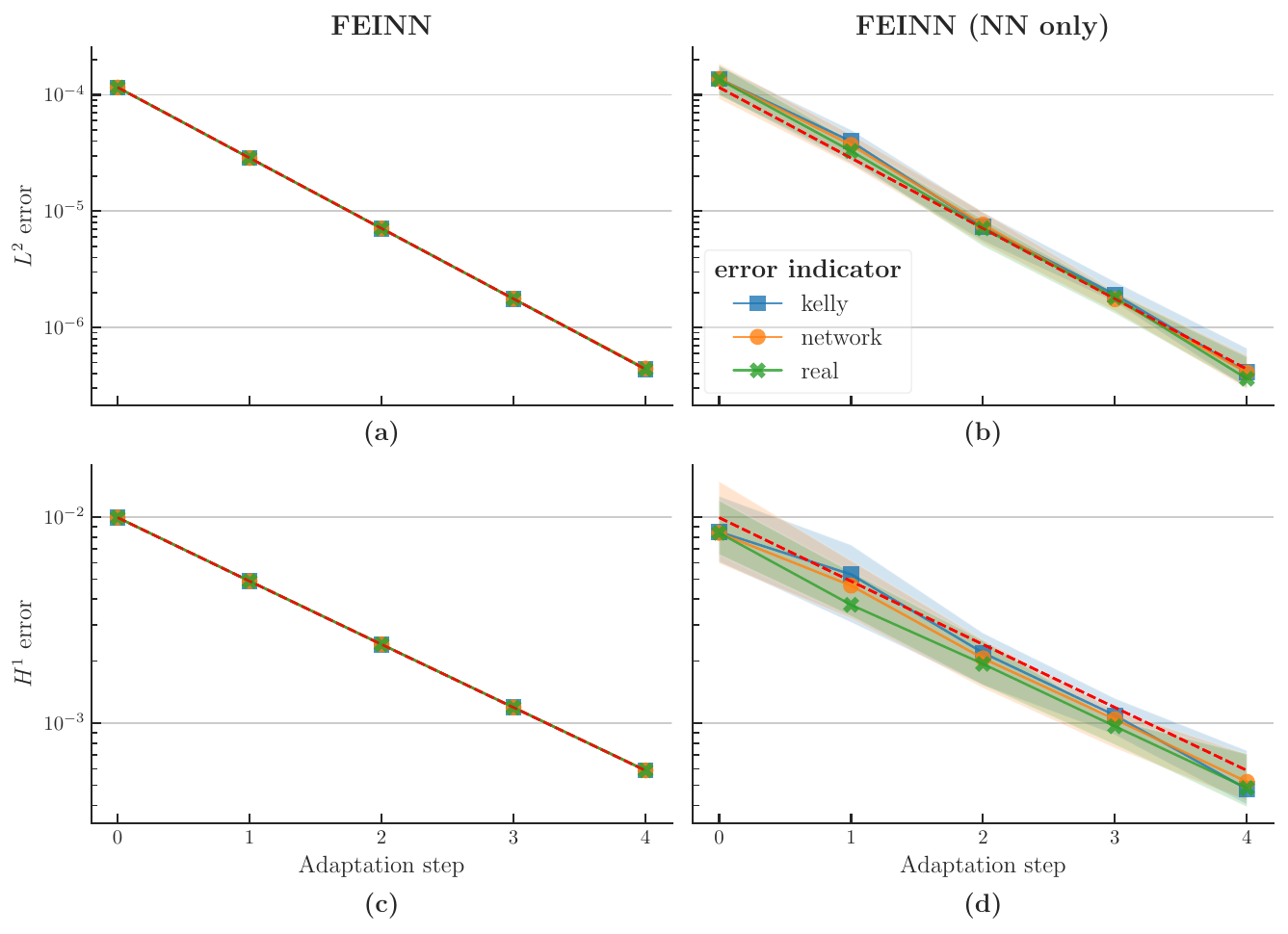}
    \caption{Convergence of $u^{id}$ in $L^2$ and $H^1$ errors for the 3D problem featuring singularities, using different error indicators. For each error indicator, 10 independent runs are performed. See caption of \fig{fig:arc_indicator_study_error_convergence} for details on the information being displayed in this figure.}
    \label{fig:singular3d_indicator_study_error_convergence}
\end{figure}

The error convergence of both \acp{feinn} and \acp{nn}, as presented in \fig{fig:singular3d_indicator_study_error_convergence}, illustrate an important finding: all three error indicators are equally effective in solving this 3D problem.
Notably, in the case of \acp{feinn}, both $L^2$ errors (\fig{fig:singular3d_indicator_study_error_convergence}a) and $H^1$ errors (\fig{fig:singular3d_indicator_study_error_convergence}c) are so closely aligned with the red dashed \ac{fem} lines that they appear visually indistinguishable. This highlights the exceptional accuracy of \acp{feinn} in mirroring the $h$-adaptive \ac{fem} solution, with negligible impact from \ac{nn} initialisations. 
Furthermore, the generalisation capability of \acp{nn} is convincing, with both $L^2$ errors (\fig{fig:singular3d_indicator_study_error_convergence}b) and $H^1$ errors (\fig{fig:singular3d_indicator_study_error_convergence}d) closely tracking the \ac{fem} lines. While there is observable variance in these errors, the associated bands fall within an acceptable range and are not excessively wide.
In short, the results confirm the ability of $h$-adaptive \acp{feinn} in effectively attacking \acp{pde} with singularities in 3D.

\begin{figure}[h]
    \begin{subfigure}[t]{0.32\linewidth}
        \centering
        \includegraphics[height=0.88\textwidth]{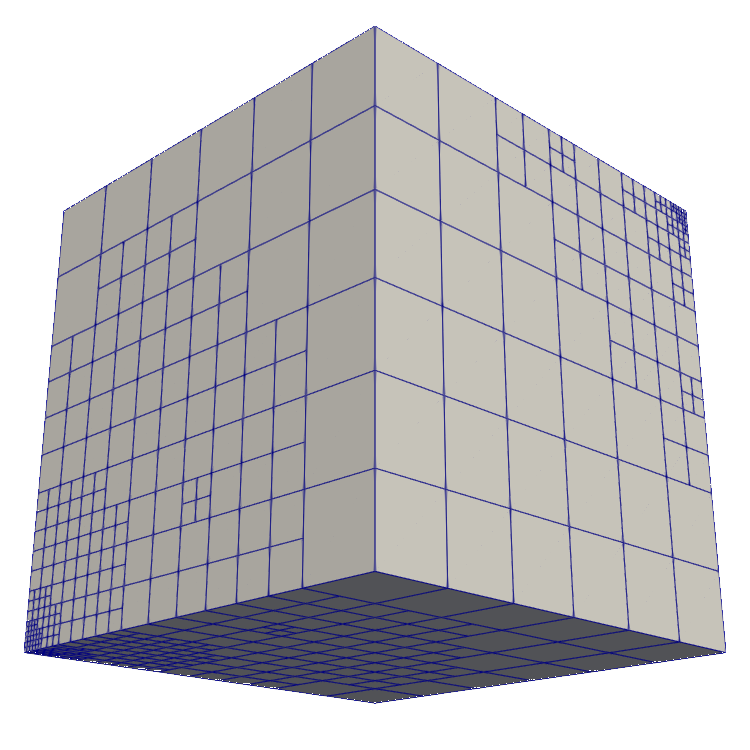}
        \caption{$\mathcal{T}_4^k$}
        \label{fig:singular3d_kelly_indicator_final_mesh}
    \end{subfigure}
    \begin{subfigure}[t]{0.32\linewidth}
        \centering
        \includegraphics[height=0.88\textwidth]{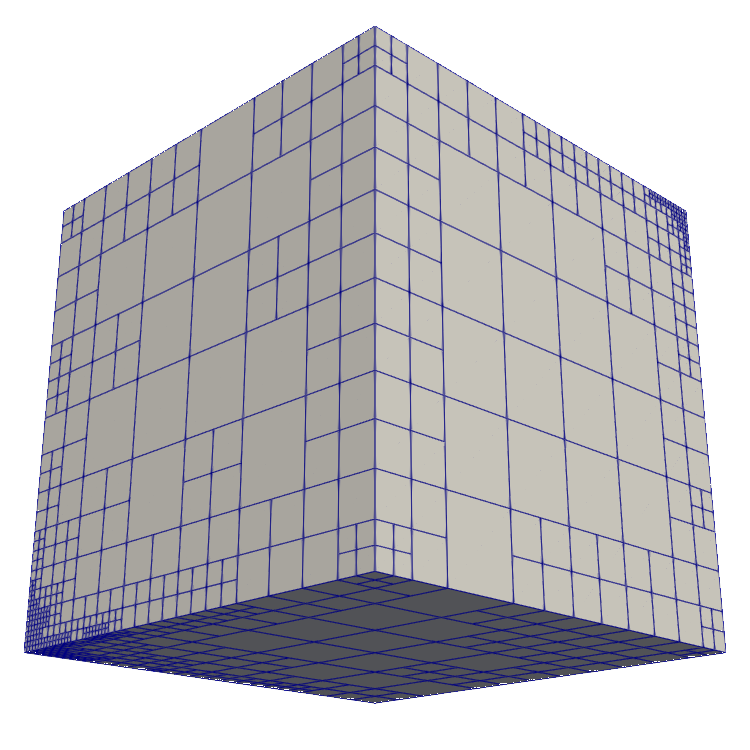}
        \caption{$\mathcal{T}_4^n$}
        \label{fig:singular3d_network_indicator_final_mesh}
    \end{subfigure}
    \begin{subfigure}[t]{0.32\linewidth}
        \centering
        \includegraphics[height=0.88\textwidth]{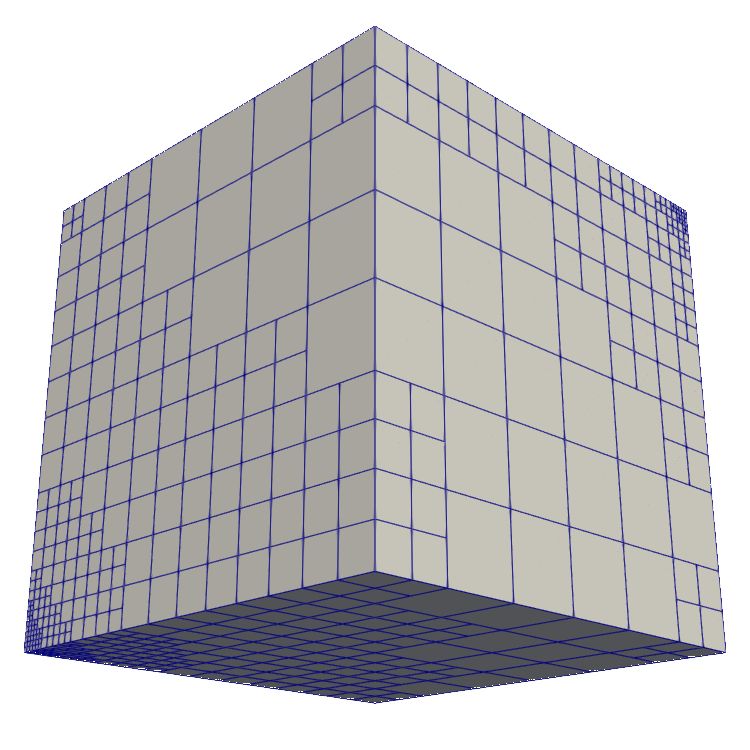}
        \caption{$\mathcal{T}_4^r$}
        \label{fig:singular3d_real_indicator_final_mesh}
    \end{subfigure}
     
    \caption{Comparison of final meshes obtained by training of the $h$-adaptive \acp{feinn} for the 3D problem with singularities. The meshes result from 4 mesh adaptation steps using Kelly, network, and real error indicators, respectively.}
    \label{fig:singular3d_feinn_final_meshes}
\end{figure}

\fig{fig:singular3d_feinn_final_meshes} displays the final meshes obtained using different error indicators. Notably, all three error indicators effectively identify the singular points and refine the elements around them, despite slight variations in the mesh patterns.
In particular, $\mathcal{T}_4^k$ (\fig{fig:singular3d_feinn_final_meshes}a) with 1,819 elements and 102,587 \acp{dof} and $\mathcal{T}_4^r$ (\fig{fig:singular3d_feinn_final_meshes}c) with 1,819 elements and 101,835 \acp{dof} look very similar to the $h$-adaptive \ac{fem} final mesh (\fig{fig:singular3d_state_and_meshes}c) with 1,826 elements and 102,811 \acp{dof}, while $\mathcal{T}_4^n$ (\fig{fig:singular3d_feinn_final_meshes}b) with 2,050 elements and 109,771 \acp{dof} tends to refine the elements along the edges. 
Relatively, the number of elements and \acp{dof} for these three meshes are very close, indicating similar computational costs for training with different error indicators.
With a high interpolation order ($k_U=4$) and a fine initial mesh, the specific patterns of the resulting meshes are less critical, as long as the elements are refined around singular points. 
It is noteworthy that, even in the existence of two singular points, the network error indicator demonstrates comparable performance to the well-known Kelly error indicator.
\section{Conclusions} \label{sec:conclusions}
In this work, we propose the $h$-adaptive \ac{feinn} method, an extension of the \ac{feinn} method~\cite{Badia2024} that incorporates adaptive meshes. The method inherits the desired properties of \acp{feinn}, namely the applicability to general geometries, simple imposition of Dirichlet boundary conditions and accurate integration of the interpolated weak residual. Furthermore, since the interpolation of the \ac{nn} is performed on an adaptive \ac{fe} space, the method preserves the non-linear capabilities of the \ac{nn} to capture sharp gradients and singularities. The dual norm of the finite element residual in the loss function enhances training robustness and accelerates convergence. We propose a train, estimate, mark and adapt strategy to train the \ac{feinn} and adapt the mesh simultaneously. We have carried out a detailed numerical analysis of the method, proving a priori error estimates depending on the expressiveness of the neural network compared to the interpolation mesh.

Our numerical findings confirm the capability of $h$-adaptive \acp{feinn} in capturing sharp gradients and singularities in 2D and 3D cases. 
Remarkably, in the 2D arc wavefront experiments, \ac{nn} exhibit excellent generalisation, surpassing \ac{fem} solutions by two orders of magnitude in terms of $H^1$ error at the final mesh adaptation step.
Additionally, our results verify the substantial improvements in training robustness and convergence with the introduction of preconditioning techniques.
The 2D L-shaped singularity experiments showcase the superior error reduction capabilities of $h$-adaptive \acp{feinn} compared to uniform refinement \ac{fem} for various interpolation orders. 
Furthermore, we demonstrate the method's capability to capture multiple singularities in the 3D numerical experiments. Alongside the development, we introduce a \ac{nn} error indicator and conduct a thorough comparison with the standard Kelly error indicator in our experiments. The results show that the \ac{nn} error indicator is competitive with the Kelly error indicator in terms of mesh adaptation efficiency and accuracy.

There are several promising directions for future exploration. One research avenue is to examine the effectiveness of the adaptive training strategy in solving inverse problems. In addition, we aim to investigate the applicability of the proposed approach in attacking problems featuring singularities defined in $H(\mathrm{curl})$ and $H(\mathrm{div})$ spaces~\cite{OLM2019}. Towards increasing applicability, we would also like to explore the combination of $h$-adaptive \acp{feinn} with unfitted \ac{fe} spaces~\cite{Badia2021}, which can be used to tackle problems with complex interfaces and geometries. Future work will also focus on extending the framework to $p$ and $r$-refinement strategies, as well as to transient \acp{pde}. 
Finally, a mathematical analysis of generalisation error bounds for the \ac{feinn}-trained \acp{nn} is a promising direction for future research.

\section{Acknowledgments}
This research was partially funded by the Australian Government through the Australian Research Council
(project numbers DP210103092 and DP220103160). This work was also supported by computational resources provided by the Australian Government through NCI under the NCMAS and ANU Merit Allocation Schemes. W. Li acknowledges the Monash Graduate Scholarship from Monash University, and the support from the Laboratory for Turbulence Research in Aerospace and Combustion (LTRAC) at Monash University through the use of their HPC Clusters.

\printbibliography

\end{document}